\DeclareFontFamily{OT1}{rsfs}{}
\DeclareFontShape{OT1}{rsfs}{n}{it}{<-> rsfs10}{}
\DeclareMathAlphabet{\mathscr}{OT1}{rsfs}{n}{it}
 \DeclareMathOperator{\Li}{Li}
 \DeclareMathOperator{\Gal}{Gal}
 \DeclareMathOperator{\GL}{GL}
  \DeclareMathOperator{\cd}{cd}
  \DeclareMathOperator{\Irr}{Irr}
 \newcommand{\mymod}[1]{(\operatorname{mod} #1)}
\def\q{\mathfrak{q}}
\def\p{\mathfrak{p}}
\def\a{\mathfrak{a}}
\def\m{\mathfrak{m}}
\def\e{\mathfrak{e}}
\def\norm{{\rm N}}       
\let\@@pmod\pmod
\DeclareRobustCommand{\pmod}{\@ifstar\@pmods\@@pmod}
\def\@pmods#1{\mkern4mu({\operator@font mod}\mkern 6mu#1)}
\newtheorem{theorem}{Theorem}
 \newtheorem{lemma}[theorem]{Lemma}
   \theoremstyle{remark}
 \newtheorem*{remark}{Remark}
\begin{document}

\title[Cyclicity and Exponent of Elliptic Curves Modulo $p$ in AP]{Cyclicity and Exponent of  Elliptic Curves\\Modulo $p$ in Arithmetic Progressions}

   \dedicatory{Dedicated to Professor Ram Murty on the occasion of his seventieth birthday}

\author{Peng-Jie Wong}

\address{Department of Applied Mathematics\\
National Sun Yat-Sen University\\
Kaohsiung City, Taiwan}
\email{pjwong@math.nsysu.edu.tw}
\subjclass[2010]{11G05, 11N36, 11G15, 11R45} 

\thanks{This research was partially supported by the NSTC grant 111-2115-M-110-005-MY3.}


\keywords{Elliptic curves, cyclicity problem, exponents of elliptic curves, primes in arithmetic progressions}
\date{}

\begin{abstract}
In this article, we study the cyclicity problem of elliptic curves $E/\Bbb{Q}$ modulo primes in a given arithmetic progression. We extend the recent work of Akbal and G\"ulo\u{g}lu by proving an unconditional asymptotic for such a cyclicity problem over arithmetic progressions for CM elliptic curves $E$, which also presents a generalisation of the previous works of Akbary, Cojocaru, M.R. Murty, V.K. Murty, and Serre. In addition, we refine the conditional estimates of Akbal and G\"ulo\u{g}lu, which gives log-power savings (for small moduli) and consequently improves the work of Cojocaru and M.R. Murty. Moreover, we study the average exponent of $E$ modulo primes in a given arithmetic progression and obtain several conditional and unconditional estimates, extending the previous works of Freiberg, Kim, Kurlberg, and Wu.
\end{abstract}

\maketitle

\section{Introduction}

Let $E$ be an elliptic curve, defined over $\Bbb{Q}$, of conductor $N_E$. For a prime $p$ of good reduction, we let $\bar{E}$ denote the reduction of $E$ modulo $p$ and $\bar{E}(\Bbb{F}_p)$ be the group of rational points of $\bar{E}$ over $\Bbb{F}_p$. The study of $\bar{E}(\Bbb{F}_p)$, as $p$ varies, manifests as part of the ``analytic theory'' of elliptic curves. Most profoundly, Lang and Trotter \cite{LT} formulated an elliptic curve analogue of Artin's primitive root conjecture, and Serre \cite{Se85} considered the cyclicity problem of estimating
$$
\pi_{c}(x,E)=\#\{p \le x \mid \text{$p\nmid N_E$ and $\bar{E}(\Bbb{F}_p)$ is cyclic}\}. 
$$
For each $m\in\Bbb{N}$, let $E[m]$ denote the group of $m$-torsion points of $E$, and let $\Bbb{Q}(E[m])$ be the $m$-th division field of $E$. In light of Hooley's conditional resolution of Artin's primitive root conjecture, Serre proved that assuming the generalised Riemann hypothesis (GRH), if $\Bbb{Q}(E[2])\neq \Bbb{Q}$, then one has
$$
\pi_{c}(x,E) = \mathfrak{c}_E \Li(x) + o\left( \frac{x}{\log x}\right),
$$
where $\Li(x)$ is the usual logarithmic integral,
 $$
\mathfrak{c}_E=\sum_{m=1}^{\infty}\frac{\mu(m)}{[\Bbb{Q}(E[m]):\Bbb{Q}]},
$$
and $\mu(m)$ is the M\"obius function. This was improved by  M.R. Murty \cite[pp. 160-161]{MR83}, who showed that under GRH, one has
$$
\pi_{c}(x,E) = \mathfrak{c}_E \Li(x) + O\left( \frac{x(\log\log x)}{(\log x)^2}\right).
$$ 
Moreover, the error term above was sharpened considerably by Cojocaru and M.R. Murty \cite{CM04}. They proved under GRH that for any elliptic curve $E/\Bbb{Q}$  of conductor $N_E$, if $E$ is with complex multiplication (CM) by the full ring of integers $\mathcal{O}_K$ of an imaginary quadratic field $K$, one has
\begin{equation}\label{CMe}
\pi_{c}(x,E) = \mathfrak{c}_E \Li(x) + O( x^{3/4}(\log (N_E x))^{{1}/{2}}),
\end{equation}
and if $E$ is non-CM, one has
\begin{equation}\label{CoMu-non-CM}
\pi_{c}(x,E) = \mathfrak{c}_E \Li(x) + O(x^{5/6} (\log (N_E x))^{2/3})  + O\left( \frac{(\log\log x) (\log (N_E x) )}{\log x}  A(E)^3 \right), 
\end{equation}
where $A(E)$ is the Serre's constant associated to $E$.\footnote{Recall that for each $m$, there is a natural injective representation $\rho_m:\Gal(\Bbb{Q}(E[m])/\Bbb{Q})\rightarrow \GL_2(\Bbb{Z}/m\Bbb{Z})$ associated to $E$. Serre \cite{Se71} proved that if $E$ is non-CM, then there exists a finite set $S_E$ of primes such that $\rho_\ell$ is surjective whenever $\ell\notin S_E$. Furthermore, setting
\begin{equation}\label{def-A(E)}
A(E)=2\cdot 3\cdot 5 \cdot \prod_{\ell\in S_E \backslash\{2,3,5\}} \ell,
\end{equation}
Serre's constant associated to $E$, $\rho_m$ is surjective when $(m,A(E))=1$ (see \cite[Appendix]{Co04}).}

There are several unconditional results regarding $\pi_{c}(x,E)$. For any elliptic curve $E/\Bbb{Q}$, Gupta and M.R. Murty \cite{GM90} showed that $\pi_{c}(x,E)\gg x/(\log x)^2$. Moreover, for CM elliptic curves,  the assumption of GRH was removed by M.R. Murty in \cite{MR83} (where Wilson's Bombieri-Vinogradov theorem for number fields  \cite{Wil} was used, and no error terms were given). Furthermore, by the sieve of Eratosthenes and the effective version of the Chebotarev density theorem due to  Lagarias and  Odlyzko \cite{LO77} (instead of using  Wilson's theorem), Cojocaru \cite{Co03} proved that for any CM elliptic curve $E/\Bbb{Q}$ of conductor $N_E$,
$$
\pi_{c}(x,E) = \mathfrak{c}_E \Li(x) + O\left( \frac{x}{(\log x)(\log\log ((\log x)/N_E^2))}\frac{\log\log x}{\log ((\log x)/N_E^2)}\right).
$$
Moreover, adapting M.R. Murty's argument and applying the Bombieri-Vinogradov theorem for number fields  established by Huxley \cite{Hu86}, Akbary and  V.K. Murty \cite{AM} obtained the improvement that given any CM elliptic curve  $E/\Bbb{Q}$  of conductor $N_E$, for any $A, B> 0$, one has 
\begin{align}\label{AMe}
\pi_{c}(x,E) =\mathfrak{c}_E\Li(x)+O_{A,B}\left(\frac{x}{(\log x)^A}\right)
\end{align}
uniformly in $N_E \le (\log x)^B$, where the implied constant only depends on $A$ and $B$.

Recently, Akbal and G\"ulo\u{g}lu \cite{AG} proposed the cyclicity problem over arithmetic progressions, asking for estimates of
$$
\pi_{c}(x,E,q,a) = \#\{p \le x \mid \text{$p\nmid N_E$, $p\equiv a\,\mymod{q}$, and $\bar{E}(\Bbb{F}_p)$ is cyclic}\}
$$
when $(a,q)=1$. By extending the argument of Gupta and M.R. Murty \cite{GM90}, they proved the following theorem.

\begin{theorem}[{\cite[Theorem 1]{AG}}] \label{AG-uncond}
 Let $E/\Bbb{Q}$ be an elliptic curve, and let $a$ and $q$ be coprime natural numbers such that $(a-1, q )$ has no odd prime divisors. Assume that $[\Bbb{Q}(E[2]):\Bbb{Q}] = 3$. Then for any fixed $A\ge 0$ and sufficiently large $x$, when $q \ll (\log x)^A$,  one has $\pi_{c}(x,E,q,a)
 \gg x/(\log x)^{2+A}$ unless $\Bbb{Q}(E[2]) \subseteq \Bbb{Q}(\zeta_q)$ and $\sigma_a$ fixes $\Bbb{Q}(E[2])$. (Here, as later, $\Bbb{Q}(\zeta_q)$ is the $q$-th cyclotomic extension of the rationals formed by adjoining a primitive $q$-th root of unity $\zeta_q$, and $\sigma_a$ denotes the automorphism $\zeta_q \mapsto \zeta_q^a$.)
\end{theorem}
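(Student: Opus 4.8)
The plan is to adapt the lower‑bound method of Gupta and M.R. Murty \cite{GM90} to a fixed residue class $a\bmod q$: one selects a sub‑family of primes $p\le x$ for which the $2$‑division field $K_2:=\Bbb Q(E[2])$ governs the principal obstruction to cyclicity of $\bar E(\Bbb F_p)$, and then clears the remaining obstructions with a sieve. The starting point is the classical criterion: for $p\nmid N_E$, the group $\bar E(\Bbb F_p)$ is cyclic if and only if $p$ does not split completely in $\Bbb Q(E[\ell])$ for any prime $\ell$; moreover, if $\bar E(\Bbb F_p)$ is non‑cyclic, witnessed by a prime $\ell$ with $E[\ell]\subseteq\bar E(\Bbb F_p)$, then the Weil pairing forces $\zeta_\ell\in\Bbb F_p$, i.e. $\ell\mid p-1$, while Hasse's bound forces $\ell^2\le\#\bar E(\Bbb F_p)<4x$, i.e. $\ell<2\sqrt x$. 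Consequently, writing $z:=(\log x)^{A+2}$, if $p\le x$ satisfies $p\equiv a\pmod q$, $p\nmid N_E$, $p$ does not split completely in $K_2$, and $\ell\nmid p-1$ for every odd prime $\ell\le z$, then $p$ is counted by $\pi_c(x,E,q,a)$ unless $E[\ell]\subseteq\bar E(\Bbb F_p)$ for some odd prime $\ell\in(z,2\sqrt x)$.

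Next I would analyse the $2$‑division field. Since $K_2/\Bbb Q$ is Galois with group embedded in $\GL_2(\Bbb F_2)\cong S_3$, the hypothesis $[\Bbb Q(E[2]):\Bbb Q]=3$ forces $\Gal(K_2/\Bbb Q)\cong C_3$; in particular $K_2$ is abelian of conductor bounded in terms of $E$, and, since $3$ is prime, $K_2\cap\Bbb Q(\zeta_q)$ equals $\Bbb Q$ or $K_2$. The conditions ``$p\equiv a\pmod q$'' and ``$p$ does not split completely in $K_2$'' together single out a set of Frobenius classes in the abelian field $K_2\Bbb Q(\zeta_q)$, and this set is empty exactly when $K_2\subseteq\Bbb Q(\zeta_q)$ and $\sigma_a|_{K_2}=\mathrm{id}$ — the exception stated in the theorem, in which every $p\equiv a\pmod q$ has $E[2]\subseteq\bar E(\Bbb F_p)$, so $\pi_c(x,E,q,a)=O(1)$ — while otherwise it has relative density $\gg 1/\varphi(q)$. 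The hypothesis that $(a-1,q)$ has no odd prime divisor plays a double role: combined with the fact that $K_2$ is abelian it is what makes the displayed situation the \emph{only} obstruction of this type, and for every odd prime $\ell$ it ensures $\ell\nmid p-1$ for a positive proportion of $p\equiv a\pmod q$ — automatically when $\ell\mid q$ (for then $\ell\nmid a-1$), and ``freely'' modulo $q$ when $\ell\nmid q$ — so that no odd prime $\ell$ can obstruct cyclicity for \emph{all} $p$ in the progression.

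I would then run the sieve. Let $\mathcal A$ be the set of primes $p\le x$ with $p\equiv a\pmod q$, $p\nmid N_E$, and $p$ not splitting completely in $K_2$; assuming we are not in the exceptional case, Siegel--Walfisz applied to the abelian field $K_2\Bbb Q(\zeta_q)$ (of conductor $\ll q\ll(\log x)^A$) gives $|\mathcal A|\gg x/(\varphi(q)\log x)$. Sieving $\{p-1:p\in\mathcal A\}$ by the odd primes $\ell\le z$ is a linear ($\kappa=1$) sieve with local densities $\tfrac1{\ell-1}$ (modified at the finitely many $\ell$ dividing $q$ or the conductor of $K_2$, which only changes the implied constant), and the progressions occurring in its decomposition have modulus at most a fixed power of $\log x$, hence lie in the Siegel--Walfisz range once its exponent is taken large enough in terms of $A$; a lower‑bound sieve then yields
\[
\#\bigl\{p\in\mathcal A:\ \ell\nmid p-1\ \text{ for all odd }\ell\le z\bigr\}\ \gg\ \frac{x}{\varphi(q)\log x}\prod_{3\le\ell\le z}\Bigl(1-\frac1{\ell-1}\Bigr)\ \gg\ \frac{x}{\varphi(q)(\log x)\log\log x}.
\]
From this count I subtract $\sum_{z<\ell<2\sqrt x}\#\{p\le x:E[\ell]\subseteq\bar E(\Bbb F_p)\}$. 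Using the classical uniform estimate $\#\{p\le x:E[\ell]\subseteq\bar E(\Bbb F_p)\}\ll x/\ell^2$ for $\ell\ll\sqrt x$, this tail is $\ll x/z=x/(\log x)^{A+2}$, which is $o$ of the main term above. Since $\varphi(q)\le q\ll(\log x)^A$, the surviving lower bound is $\gg x/\bigl((\log x)^{A+1}\log\log x\bigr)\gg x/(\log x)^{A+2}$, as claimed.

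The main obstacle is the uniform unconditional bound $\#\{p\le x:E[\ell]\subseteq\bar E(\Bbb F_p)\}\ll x/\ell^2$: it has to hold for \emph{every} odd $\ell$ up to $2\sqrt x$ — in particular for $\ell$ where the mod‑$\ell$ Galois image is far from surjective — and when $\ell$ is close to $\sqrt x$ one must exploit that $E[\ell]\subseteq\bar E(\Bbb F_p)$ forces $p\ge(\ell-1)^2$, confining $p$ to a short interval; one proves it by combining $p\equiv1\pmod\ell$ with $\ell^2\mid\#\bar E(\Bbb F_p)$ via Brun--Titchmarsh. Anything weaker than the genuine $\ell^{-2}$ saving would let the tail swamp the sieve's main term. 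The remaining steps — keeping every Chebotarev/Siegel--Walfisz input uniform for $q\ll(\log x)^A$, and disentangling the cyclic cubic field $K_2$ from the cyclotomic fields used by the sieve — are bookkeeping, made harmless by the fact that $K_2$ has conductor bounded in terms of $E$.
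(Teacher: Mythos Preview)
The paper does not prove this statement; Theorem~\ref{AG-uncond} is quoted from \cite[Theorem 1]{AG} (introduced with ``By extending the argument of Gupta and M.R.\ Murty \cite{GM90}, they proved the following theorem''), and no proof is given here. So there is no proof in the present paper to compare your sketch against beyond the one-line attribution that the method is an extension of \cite{GM90} --- which your outline indeed is.

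That said, your sketch has a genuine gap at exactly the point you yourself flag as ``the main obstacle.'' You assert the uniform bound $\#\{p\le x:E[\ell]\subseteq\bar E(\Bbb F_p)\}\ll x/\ell^2$ and propose to prove it ``by combining $p\equiv 1\pmod\ell$ with $\ell^2\mid\#\bar E(\Bbb F_p)$ via Brun--Titchmarsh.'' But that argument (which is precisely the one from \cite[\S4]{CM04}, reproduced in Section~\ref{non-CM-refine} of this paper) only yields
\[
\pi_{E,\ell}(x)\ \ll\ \frac{x^{3/2}}{\ell^3\log(9x/\ell^2)}+\frac{x}{\ell^2},
\]
because one must sum over the $\asymp\sqrt x/\ell$ admissible values $b\equiv 2\pmod\ell$ of $a_p$ and then count $p\equiv b-1\pmod{\ell^2}$. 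For $\ell$ as small as your $z=(\log x)^{A+2}$ the first term dominates, and your tail becomes
\[
\sum_{z<\ell<2\sqrt x}\pi_{E,\ell}(x)\ \gg\ \frac{x^{3/2}}{z^2\log x}\ =\ \frac{x^{3/2}}{(\log x)^{2A+5}},
\]
which swamps the sieve's main term by a power of $x$. The clean $x/\ell^2$ bound you invoke is available unconditionally only in the CM case (via lattice-point counting in $\mathcal O_K$; this is \eqref{tri_bd_Em} in the paper, stated there explicitly under a CM hypothesis).

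Because of this, the Siegel--Walfisz-level choice $z=(\log x)^{A+2}$ cannot work for non-CM $E$. The Gupta--Murty/Akbal--G\"ulo\u{g}lu argument instead takes the sifting level to be a small power of $x$ and controls the sieve remainders via Bombieri--Vinogradov rather than Siegel--Walfisz; this is also why the eventual exponent is $2+A$ (coming from $\log x\cdot\log z$ with $\log z\asymp\log x$) rather than the $1+A+o(1)$ your sketch would give if the tail bound held.
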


Furthermore,  under GRH,  by generalising the work of Cojocaru and M.R. Murty \cite{CM04}, Akbal and G\"ulo\u{g}lu \cite{AG} determined the asymptotics for $\pi_{c}(x,E,q,a)$ as follows.

\begin{theorem}[{\cite[Theorems 3 and 5]{AG}}] \label{AG-main-thm}
 Let $E/\Bbb{Q}$ be an elliptic curve of conductor $N_E$, and let $a$ and $q$ be coprime natural numbers. Assume that GRH is valid for the Dedekind zeta function of  $\Bbb{Q}(E[m])\Bbb{Q}(\zeta_q)$ for every square-free $m$. Define
$$
 \mathfrak{c}_E(q,a) =\sum_{m=1}^{\infty}\frac{ \gamma_{E,m}(q,a)\mu(m)}{[\Bbb{Q}(E[m])\Bbb{Q}(\zeta_q):\Bbb{Q}]}, 
$$
where $\gamma_{E,m}(q,a) = 1$ if the automorphism $\sigma_a$, defined as in Theorem \ref{AG-uncond},  fixes $\Bbb{Q}(E[m]) \cap \Bbb{Q}(\zeta_q )$, and it is 0 otherwise.
 Then one has
$$
\pi_{c}(x,E,q,a)  = \mathfrak{c}_E(q,a) \Li(x) +\mathcal{E}_c(x),
$$  
where  if $E$ is with complex multiplication by the full ring of integers $\mathcal{O}_K$ of an imaginary quadratic field $K =\Bbb{Q}(\sqrt{-D})$, $\mathcal{E}_c(x)$ satisfies
\begin{align*}
 \begin{split}
\mathcal{E}_c(x) 
&\ll  x^{3/4} \left(\frac{\log(qN_Ex) G_D(a,q)}{q^3} \right)^{1/2} 
+ x^{3/4} \left(\frac{\log(qN_Ex)}{\log x} \right)^{1/2}\\
&+x^{1/2} q \log(qN_E x) + x^{1/2} \left( \frac{1}{q} +\frac{\log x}{q^2} \right)  G_D(a,q)
  \end{split}
\end{align*}
with $G_D(a,q)<  c\cdot 4^{\omega(q)} \tau_2(q) q^2,$ (here, $c = 2$ if $D\equiv 1,2\enspace\mymod{4}$,  or $D\equiv 3\enspace\mymod{4}$ and $q$ is odd; $c = 49$ otherwise); and if $E$ is non-CM, one has
\begin{align}\label{AG-non-CM-GRH}
 \begin{split}
\mathcal{E}_c(x)&\ll  x^{5/6} \left( \frac{H(q) (\log  ( qN_Ex))^2}{q}\right)^{1/3}
+x^{5/8}  \left( \frac{\tau_2(q_2)(\log  ( qN_Ex))^3}{\phi(q)\log x } M_E^3 \right)^{1/4} \\
& + x^{1/2} q\log  ( qN_Ex) + \frac{\tau_2(q_2)}{\phi(q) x^{1/2}\log x }M_E^3,
  \end{split}
\end{align}
where $q_2$ denotes the largest divisor of $q$ that is coprime to $M_E$ defined in \eqref{def-ME} below.
\end{theorem}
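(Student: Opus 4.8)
The plan is to run a Möbius sieve over the group structures of $\bar E(\mathbb{F}_p)$, in the style of Serre \cite{Se85} and Cojocaru--M.R. Murty \cite{CM04}, but carrying out every Chebotarev-type estimate inside the compositum with $\Q(\zeta_q)$ and tracking the dependence on $q$ throughout. Writing $\bar E(\mathbb{F}_p)\cong\Z/d_1\Z\oplus\Z/d_2\Z$ with $d_1\mid d_2$, one has $E[m]\subseteq\bar E(\mathbb{F}_p)$ iff $m\mid d_1$, and for $p\nmid mN_E$ this holds iff $p$ splits completely in $\Q(E[m])$; hence $\sum_{m\mid\rad(d_1)}\mu(m)$ is the indicator that $\bar E(\mathbb{F}_p)$ is cyclic, and summing over the relevant $p\le x$ gives
\[
\pi_c(x,E,q,a)=\sum_{m\ge 1}\mu(m)\,\pi_m(x,q,a),
\]
where $\pi_m(x,q,a)$ counts primes $p\le x$ with $p\equiv a\pmod q$, $p\nmid mN_E$, and $p$ split completely in $\Q(E[m])$. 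For squarefree $m$ this is a Chebotarev count in $L_{m,q}:=\Q(E[m])\Q(\zeta_q)$: a prime is counted exactly when its Frobenius is trivial on $\Q(E[m])$ and equals $\sigma_a$ on $\Q(\zeta_q)$. Since $\Gal(L_{m,q}/\Q)$ embeds into $\Gal(\Q(E[m])/\Q)\times\Gal(\Q(\zeta_q)/\Q)$, there is at most one such element, and it exists precisely when $\sigma_a$ fixes $\Q(E[m])\cap\Q(\zeta_q)$; this is the origin of the factor $\gamma_{E,m}(q,a)$, and the associated density is $\gamma_{E,m}(q,a)/[L_{m,q}:\Q]$, which already accounts for the shape of $\mathfrak{c}_E(q,a)$.

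The next step is to truncate the sum at a parameter $y_1$ (with a further parameter $y_2>y_1$ held in reserve), to be optimised at the end. For $m\le y_1$, the GRH-conditional effective Chebotarev density theorem for $L_{m,q}/\Q$ replaces $\pi_m(x,q,a)$ by $\gamma_{E,m}(q,a)\Li(x)/[L_{m,q}:\Q]$ with an error that, using $\log|d_{L_{m,q}}|\ll[L_{m,q}:\Q]\log(mqN_E)$ (only primes dividing $mqN_E$ ramify in $L_{m,q}$), is $\ll x^{1/2}\log(mqN_Ex)$; summing over $m\le y_1$ contributes $\ll x^{1/2}y_1\log(y_1qN_Ex)$. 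Completing the main term to all $m$ costs the tail $\Li(x)\sum_{m>y_1}[L_{m,q}:\Q]^{-1}$, which I would estimate using $[L_{m,q}:\Q]=[\Q(E[m]):\Q]\,\phi(q)/[\Q(E[m])\cap\Q(\zeta_q):\Q]$ together with the division-field degree bounds $[\Q(E[m]):\Q]\gg m^2$ in the CM case ($\Q(E[m])$ being an abelian extension of $K=\Q(\sqrt{-D})$ of degree $\asymp m^2$) and $[\Q(E[m]):\Q]\gg m^4/M_E^{O(1)}$ in the non-CM case (Serre's bound on the index of the image of $\rho_m$). In the CM case I would moreover replace the generic Chebotarev input by the much sharper bounds available for abelian extensions of $K$ via Hecke $L$-functions: parametrising the relevant split primes by $\pi\in\mathcal{O}_K$ with $\pi\bar\pi=p$, the conditions $p\equiv a\pmod q$ and $\pi\equiv 1\pmod{m\mathcal{O}_K}$ become congruences on $\pi$, and the number of residues modulo $q$ compatible with the norm form $u^2+Dv^2\equiv a\pmod q$ is exactly the quantity $G_D(a,q)$, bounded by $c\cdot 4^{\omega(q)}\tau_2(q)q^2$ by an elementary divisor-type argument; this is also where the residue of $D$ modulo $4$ and the parity of $q$ enter to fix the constant $c$.

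The difference between $\pi_c(x,E,q,a)$ and the truncated sum is supported on primes $p\le x$ with $p\equiv a\pmod q$ for which $d_1(p)>y_1$; since $d_1(p)\mid p-1$ (by the Weil pairing) and $d_1(p)^2\mid\#\bar E(\mathbb{F}_p)=p+1-a_p\le(x^{1/2}+1)^2$, one has $d_1(p)\le x^{1/2}+1$, and (after absorbing a divisor-function weight by a standard moment estimate) this discrepancy is controlled by $\sum_{y_1<d\le x^{1/2}+1}\#\{p\le x:\ p\equiv a\pmod q,\ d\mid p-1,\ d^2\mid p+1-a_p\}$. I would bound this sum by splitting at $y_2$: for $y_1<d\le y_2$, effective Chebotarev in the relevant division field gives a contribution of the same quality as the small-$m$ error, while for $y_2<d\le x^{1/2}+1$ one uses only the weaker consequences $d\mid p-1$ and the square $d^2\mid p+1-a_p$, together with Brun--Titchmarsh for the progression $p\equiv 1\pmod d$, $p\equiv a\pmod q$. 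The square divisor is essential here --- dropping it leaves a divergent sum over $d$ --- and in the CM case it translates, via $p=\pi\bar\pi$, into a congruence on $\pi$ modulo an ideal of norm $d^2$, which restores convergence with a $1/y_2$-type tail and is what produces the $G_D(a,q)$-dependent terms in the error.

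Finally I would optimise $y_1$ and $y_2$ by balancing the accumulated Chebotarev error against the main-term tail and the top-range estimate. In the CM case the abelian/Hecke estimates permit cutoffs of size $\asymp x^{1/4}$ up to $q$- and log-factors, yielding the $x^{3/4}$-type terms; specialising to $q=1$ recovers \eqref{CMe}. In the non-CM case the $m^4$-size degrees force smaller cutoffs and produce the $x^{5/6}$ and $x^{5/8}$ exponents, with the factors $q_2$, $\tau_2(q_2)$, $H(q)$ and $M_E^3$ arising from the degree bounds and the interplay between $q$ and the Serre modulus $M_E$. The substantive difficulty is not the strategy but the uniformity in $q$: one must carry the exact powers of $q$, $\phi(q)$, $\tau_2(q_2)$, $H(q)$ and $M_E$ simultaneously through (i) the index $[\Q(E[m])\cap\Q(\zeta_q):\Q]$ entering $[L_{m,q}:\Q]$, (ii) the discriminant and conductor bounds for $L_{m,q}$, (iii) the residue count $G_D(a,q)$ in the CM norm-form parametrisation, and (iv) the top-range estimate, where the coprimality of $d$ with $q$ (and with $M_E$) and the square divisor $d^2\mid p+1-a_p$ must be exploited at once. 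The principal obstacle is precisely this top-range estimate: a naive application of Brun--Titchmarsh to $p\equiv 1\pmod d$ only yields $O(x/\log x)$, so the square divisor (respectively, the norm-$d^2$ ideal in the CM case) must be used to gain the extra saving, uniformly in $q$ up to a small power of $x$.
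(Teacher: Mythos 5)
The paper states this result as a quotation of Akbal and G\"ulo\u{g}lu \cite[Theorems 3 and 5]{AG} and does not reprove it, so there is no in-paper proof to set your sketch against; the closest thing is the structure the paper re-uses from \cite[Sec.\ 3.2]{AG} in its own Section~\ref{proof-pic-GRH}, namely the decomposition $\pi_c(x,E,q,a)=\mathfrak{c}_E(q,a)\Li(x)+O(yx^{1/2}\log(qN_Ex))+O(\Sigma'_2+\Sigma_3\Li(x))$ after a two-way truncation at $y$. Against that, your high-level plan is the right one: M\"obius sieve reducing the count to the $\pi_{E,m}(x,q,a)$, GRH Chebotarev in the compositum $\Bbb{Q}(E[m])\Bbb{Q}(\zeta_q)$ with error $\ll x^{1/2}\log(mqN_Ex)$ for small $m$, the origin of $\gamma_{E,m}(q,a)$ in whether $\sigma_a$ restricts compatibly to $\Bbb{Q}(E[m])\cap\Bbb{Q}(\zeta_q)$, the appearance of $G_D(a,q)$ as a residue count for the norm form $u^2+Dv^2\equiv a\enspace\mymod{q}$ in the CM lattice-counting estimate, and a final optimisation balancing $x^{1/2}y\log$ against the middle-range and tail contributions.

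Two things are off, one cosmetic and one substantive. Cosmetically, the split in \cite{AG} (and in this paper's Section~\ref{proof-pic-GRH}) is two-way, $m\le y$ versus $y<m\le\sqrt{x}+1$; your extra cutoff $y_2$ with a Chebotarev-treated middle band is not needed, and neither is the ``divisor-function moment'': bounding the truncation error directly by $\sum_{y<m\le\sqrt{x}+1}\pi_{E,m}(x,q,a)$ suffices. Substantively, you acknowledge but do not carry out the top-range count, and that is where the content of the theorem actually sits. In particular your sketch never explains the arithmetic factor $H(q)=\sum_{d\mid q}\#\{1\le k\le d:d\mid k^2\}$ in the bound $\Sigma'_2\ll x^{1/2}\log x+\frac{x^{3/2}}{y^2 q}H(q)$; extracting it requires combining the congruences $d\mid p-1$, $d^2\mid p+1-a_p$, and $p\equiv a\enspace\mymod{q}$ and counting compatible residue classes modulo $[d^2,q]$ uniformly in $q$, which is precisely the step you flag as ``the substantive difficulty.'' Likewise the CM discussion should centre not on ``sharper Chebotarev via Hecke $L$-functions'' but on translating $d^2\mid p+1-a_p$ into a lattice-point count in $\mathcal{O}_K$ modulo $d\mathcal{O}_K$ and $q$, which is what produces the precise combination $\bigl(\frac{\sqrt{x}}{q}+\frac{\sqrt{x}\log x}{q^2}+\frac{x}{yq^3}\bigr)G_D(a,q)$ appearing in \eqref{sum-y-piEm}. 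As a roadmap the proposal is sound and consistent with \cite{AG}; as a proof it stops short exactly where the uniform-in-$q$ quantitative work lies.
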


Here, as later, $\omega(n)$ denotes the number of prime divisors of $n$, and $\tau_2(n)$ is the number of (positive) divisors of $n$. In addition, $\phi(n)$ is Euler's totient function, and the arithmetic function $H(n)$ is defined by
$$
H(n)= \sum_{d\mid n} \sum_{\substack{1\le k\le d\\ d\mid k^2 }} 1.
$$
Also, $G_D(a,q)$ is the cardinality of the set defined in \cite[Eq. (23)]{AG}, and its bound is given in \cite[Eq. (11)]{AG}. Last but not least, $M_E\in\Bbb{N}$ is defined by
\begin{equation}\label{def-ME}
M_E =\prod_{\ell\mid A(E)N_E} \ell,
\end{equation}
where $A(E)$ is the Serre's constant defined in \eqref{def-A(E)}.

One of the main objectives of this article is to prove the following unconditional asymptotic of $\pi_{c}(x,E,q,a)$ for CM elliptic curves $E$.

\begin{theorem}\label{main-thm-c}
Let $E/\Bbb{Q}$ be a CM elliptic curve of conductor $N_E$, and let $a$ and $q$ be coprime natural numbers. Then for any $A, B> 0$, we have
\begin{align}\label{AMe-generalisation}
\pi_{c}(x,E,q,a) = \mathfrak{c}_E(q,a) \Li(x)+O_{A,B}\left(\frac{x}{(\log x)^A}\right),
\end{align}
uniformly in $qN_E \le (\log x)^B$, where the implied constant only depends on $A$ and $B$.
\end{theorem}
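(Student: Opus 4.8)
The plan is to adapt the strategy behind the unconditional estimate \eqref{AMe} of Akbary and V.K. Murty to the arithmetic-progression setting, replacing the division fields $\Bbb{Q}(E[m])$ with the compositer fields $\Bbb{Q}(E[m])\Bbb{Q}(\zeta_q)$ throughout. The starting point is the standard inclusion--exclusion identity
\begin{equation*}
\pi_c(x,E,q,a) = \sum_{m\ge 1}\mu(m)\,\pi_E(x,m,q,a),
\end{equation*}
where $\pi_E(x,m,q,a)$ counts primes $p\le x$ with $p\nmid N_E$, $p\equiv a\pmod q$, and such that $\bar E(\Bbb{F}_p)$ contains a full $m$-torsion subgroup; the latter conditions translate, via the Chebotarev density theorem, into a splitting condition for $p$ in $L_{m,q}:=\Bbb{Q}(E[m])\Bbb{Q}(\zeta_q)$ relative to a suitable conjugacy-stable subset (this is exactly where the factor $\gamma_{E,m}(q,a)$ and the constant $\mathfrak{c}_E(q,a)$ come from, cf.\ Theorem~\ref{AG-main-thm}). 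First I would truncate the sum at $m\le z$ for a parameter $z=z(x)$ to be chosen as a small power of $\log x$, and bound the tail $\sum_{m>z}$ trivially: a prime counted with index $m$ forces $p\equiv 1\pmod{m}$ and $|\bar E(\Bbb{F}_p)|$ divisible by $m^2$, so by Hasse $m\ll\sqrt{x}$ and the tail contributes $O(x/z)$ up to logarithmic factors — in fact one gets the Gupta--M.R.~Murty style bound $O(x(\log\log x)/(z\log x))$ or better, which is $\ll x/(\log x)^A$ once $z$ is a large enough power of $\log x$.

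The main term comes from $m\le z$, and for these $m$ the key input is an effective Chebotarev bound for the fields $L_{m,q}$ that is uniform in both $m$ (up to $z$) and $q$ (up to $(\log x)^B$). Since $E$ has CM, $\Bbb{Q}(E[m])$ is abelian over the CM field $K$, hence $L_{m,q}$ is abelian over $K$; this is precisely the setting in which one has a Bombieri--Vinogradov type theorem for number fields. Concretely I would invoke Huxley's Bombieri--Vinogradov theorem \cite{Hu86} (as used in \cite{AM}), or rather its consequence: summing the Chebotarev error over a range of moduli, the total error is $\ll_{A'} x/(\log x)^{A'}$ provided the conductor of $L_{m,q}/K$ (equivalently $[L_{m,q}:\Bbb{Q}]$, which is $\ll m^2\phi(q)$ up to the entanglement correction, and whose discriminant is bounded polynomially in $m,q,N_E$) stays below a fixed power of $\log x$. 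Because $m\le z$ and $q\le(\log x)^B$ with $z$ a power of $\log x$, and $N_E\le(\log x)^B$, all these conductors are $\le(\log x)^{B'}$ for an appropriate $B'$, so each term $\pi_E(x,m,q,a)$ equals $\gamma_{E,m}(q,a)\Li(x)/[L_{m,q}:\Bbb{Q}]$ plus an error that sums over $m\le z$ to $O_{A',B}(x/(\log x)^{A'})$. Taking $A'=A+1$ and absorbing the (convergent) tail $\sum_{m>z}\mu(m)\gamma_{E,m}(q,a)/[L_{m,q}:\Bbb{Q}]\ll 1/z$ of the main series back into the error completes the estimate, since that tail times $\Li(x)$ is $\ll x/(z\log x)\ll x/(\log x)^A$.

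The main obstacle is the uniformity in $q$: one needs the Bombieri--Vinogradov / large-sieve input for the family $\{L_{m,q}\}$ to hold with error independent of $q$ throughout the range $q\le(\log x)^B$, rather than just for a single fixed base field. I expect this requires a careful bookkeeping of how $\Bbb{Q}(E[m])$ and $\Bbb{Q}(\zeta_q)$ entangle — controlling $[L_{m,q}:\Bbb{Q}]$ and the conductor--discriminant of $L_{m,q}/K$ as $m$ and $q$ vary — together with a version of Huxley's theorem stated with explicit conductor dependence (or, alternatively, an explicit effective Chebotarev bound of Lagarias--Odlyzko type \cite{LO77} combined with a zero-density estimate, if one wants to avoid assuming the number-field Bombieri--Vinogradov theorem in the needed uniform form). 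A secondary technical point is to verify that $\gamma_{E,m}(q,a)$ is genuinely multiplicative enough (over square-free $m$) that the truncated main sum telescopes to $\mathfrak{c}_E(q,a)\Li(x)$ with the stated tail bound; this is routine given the Galois-theoretic description of $L_{m,q}\cap\Bbb{Q}(\zeta_q)$, but must be done with care because the entanglement can make $\gamma_{E,m}(q,a)$ vanish for certain $m$.
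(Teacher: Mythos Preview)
Your approach via Huxley's Bombieri--Vinogradov theorem for number fields is the natural extension of the Akbary--V.K.~Murty argument \cite{AM} and should in principle work, but the paper takes a genuinely different route. Rather than controlling the Chebotarev errors on average via a large-sieve input, the paper applies V.K.~Murty's effective Chebotarev density theorem (Theorem~\ref{VKMurtythm}), which gives a strong \emph{pointwise} error for each $\pi_{E,m}(x,q,a)$ provided Artin's holomorphy conjecture holds for $L_{m,q}/\Bbb{Q}$. The key new lemma (Lemma~\ref{AC}) verifies Artin's conjecture by showing that every irreducible character of $\Gal(L_{m,q}/\Bbb{Q})$ has degree at most $2$; combined with Siegel's theorem to dispose of the possible exceptional zero, this yields the Siegel--Walfisz type estimate \eqref{CDT-3} directly, uniformly for $m\le(\log x)^A$ and $qN_E\le(\log x)^B$, with no averaging over moduli required. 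The tail is then handled exactly by the CM lattice bound \eqref{tri_bd_Em}, giving the clean $O(x/y)$ you anticipated. What the paper's approach buys beyond yours is explicit control of the error in terms of a potential Landau--Siegel zero of a single Dirichlet $L$-function (via \eqref{E-logQ-bd}), which is precisely what enables Theorem~\ref{main-thm-NLS}; your Bombieri--Vinogradov route, while perhaps more familiar once the uniformity-in-$q$ bookkeeping you flag is carried out, cannot expose this finer structure --- a point the paper itself makes in the remark following Theorem~\ref{main-thm-exp-cond}.
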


In addition,  we have the following refinement of Theorem \ref{AG-main-thm}, which notably improves the estimates \eqref{CMe} and \eqref{CoMu-non-CM} of Cojocaru and M.R. Murty by factors of $(\log x)^{1/2}$ and $(\log x)^{1/3}$, respectively.

\begin{theorem}\label{main-thm-c-GRH}
Let $E/\Bbb{Q}$ be an elliptic curve of conductor $N_E$, and let $a$ and $q$ be coprime natural numbers. Assume GRH. If $E$ has CM, then we have
\begin{equation}\label{pic-CM-GRH}
\pi_{c}(x,E,q,a) = \mathfrak{c}_E(q,a)\Li(x)+ O\left( x^{3/4} \frac{( \log  ( qN_Ex))^{1/2}}{(\log x)^{1/2}}
  + x^{1/4} \log N_E \right).
\end{equation}
In particular, we have
$$
\pi_{c}(x,E) =  \mathfrak{c}_E \Li(x) +
O\left( x^{3/4} \frac{( \log  ( N_Ex))^{1/2}}{(\log x)^{1/2}}
  + x^{1/4} \log N_E \right).
$$
Furthermore, if $E$ is non-CM, we have
\begin{align}\label{pic-aq-GRH}
 \begin{split}
&\pi_{c}(x,E,q,a)\\
& = \mathfrak{c}_E(q,a) \Li(x)+  O\left( x^{5/6} \frac{ (\log (qN_E x))^{2/3} }{ (\log x)^{1/3}}  +  \frac{\tau_2(q_2)  \log(qN_E x)}{  \phi(q) }R_{E,q_1}  \right),
\end{split}
\end{align}
where $q_1 = \frac{q}{q_2}$,  $q_2$ denotes the largest divisor of $q$ that is coprime to $M_E$, and
\begin{equation}\label{def-REq}
R_{E,q_1} =   \sum_{d\mid M_E }  \frac{\phi((d,q_1)) d^3}{ \phi(d)  }.
\end{equation}
Consequently, we have
$$
\pi_{c}(x,E) =  \mathfrak{c}_E \Li(x) + O\left(x^{5/6} \frac{ (\log (N_E x))^{2/3} }{ (\log x)^{1/3}}  +   \log(N_E  x)    \sum_{d\mid M_E } \frac{ d^3}{ \phi(d)  }\right).
$$
In addition, the both factors $M_E^3$ in \eqref{AG-non-CM-GRH} can be replaced by  $R_{E,q_1}$.
\end{theorem}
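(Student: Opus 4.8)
The plan is to combine the inclusion--exclusion sieve for cyclicity with the effective Chebotarev density theorem under GRH, and to handle the resulting tail by exploiting the structure of $\bar E(\mathbb F_p)$; the gain of a factor $(\log x)^{1/2}$, resp.\ $(\log x)^{1/3}$, over \eqref{CMe} and \eqref{CoMu-non-CM} will come from a Brun--Titchmarsh input in that tail together with sharper bookkeeping of conductors and of the intersections $\mathbb Q(E[m])\cap\mathbb Q(\zeta_q)$. The starting point is the exact identity
\[
\pi_{c}(x,E,q,a)=\sum_{m\ge 1}\mu(m)\,\pi_E(x,m,q,a),\qquad
\pi_E(x,m,q,a):=\#\{p\le x:\ p\equiv a\,(q),\ p\nmid mN_E,\ E[m]\subseteq\bar E(\mathbb F_p)\},
\]
which holds because $\bar E(\mathbb F_p)$ is cyclic precisely when $E[\ell]\not\subseteq\bar E(\mathbb F_p)$ for every prime $\ell$, and which is a finite sum since $E[m]\subseteq\bar E(\mathbb F_p)$ forces $m^2\mid\#\bar E(\mathbb F_p)\le(\sqrt p+1)^2$. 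The condition in $\pi_E(x,m,q,a)$ asserts that $\mathrm{Frob}_p$ is trivial on $\mathbb Q(E[m])$ and equals $\sigma_a$ on $\mathbb Q(\zeta_q)$; these are compatible exactly when $\sigma_a$ fixes $\mathbb Q(E[m])\cap\mathbb Q(\zeta_q)$, i.e.\ when $\gamma_{E,m}(q,a)=1$, and then they single out one element of $\Gal(L_m/\mathbb Q)$, where $L_m:=\mathbb Q(E[m])\mathbb Q(\zeta_q)$; so $\pi_E(x,m,q,a)$ is a Chebotarev count for $L_m/\mathbb Q$ with conjugacy set of size $\gamma_{E,m}(q,a)$ and expected density $\gamma_{E,m}(q,a)/[L_m:\mathbb Q]$.

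Fixing a parameter $M$ to be optimised, I would split the sum at $m=M$ and, for $m\le M$, invoke the GRH form of the Chebotarev density theorem (Lagarias--Odlyzko \cite{LO77}, in Serre's sharpened shape). When $E$ has CM by $\mathcal{O}_K$ the key point is that $L_m/K$ is abelian, with conductor dividing a fixed power of $mqN_E$, so that $\log|d_{L_m}|\ll[L_m:\mathbb Q]\log(mqN_E)$ and $\pi_E(x,m,q,a)=\gamma_{E,m}(q,a)[L_m:\mathbb Q]^{-1}\Li(x)+O(x^{1/2}\log(mqN_Ex))+O(\omega(mN_E))$. Summing over $m\le M$, the main terms assemble into $\mathfrak{c}_E(q,a)\Li(x)$ up to a tail $\ll\Li(x)\sum_{m>M}[L_m:\mathbb Q]^{-1}$; since $\mathbb Q(\zeta_m)\subseteq\mathbb Q(E[m])$ gives $[L_m:\mathbb Q]\gg m^2\phi(q)/\phi((m,q))$, this tail is $\ll\Li(x)\,\tau_2(q)/(\phi(q)M)\ll x/(M\log x)$, the $q$-dependence being confined to the logarithm, while the error terms contribute $\ll Mx^{1/2}\log(qN_Ex)+x^{1/4}\log N_E$.

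The improvement is made in the range $M<m\le\sqrt x+1$. For $p$ split in $K$ with Frobenius $\pi_p$ one has $E[m]\subseteq\bar E(\mathbb F_p)\iff\pi_p\equiv 1\pmod{m\mathcal{O}_K}$ (inert $p$ contributing nothing once $M\ge 2$), so $\pi_E(x,m,q,a)$ counts the rational primes with $\pi_p$ in the translated lattice $1+m\mathcal{O}_K$ inside $|\pi_p|\le\sqrt x$; estimating this by a Brun--Titchmarsh bound in $K$ --- which, unlike the trivial lattice-point count used in \cite{CM04}, contributes a factor $1/\log x$ --- together with a Gauss-circle bound for $m$ near $\sqrt x$ gives $\sum_{M<m\le\sqrt x+1}\pi_E(x,m,q,a)\ll x/(M\log x)+\sqrt x\log x$. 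Choosing $M\asymp x^{1/4}(\log(qN_Ex)\log x)^{-1/2}$ balances $Mx^{1/2}\log(qN_Ex)$ against $x/(M\log x)$ and yields \eqref{pic-CM-GRH}, with $q=1$ giving the stated estimate for $\pi_c(x,E)$. For non-CM $E$ the same scheme applies, but $L_m/\mathbb Q$ is no longer abelian: I would use Serre's surjectivity of $\rho_m$ for $(m,A(E))=1$ to factor $[L_m:\mathbb Q]$ into a part coprime to $M_E$ (carrying the $q_2,\phi(q)$ dependence) and a part dividing $M_E$, summing over the latter to replace the coarse $M_E^3$ of \eqref{AG-non-CM-GRH} by $R_{E,q_1}$ (which also yields the last sentence of the theorem); for the tail I would use the Weil-pairing relation $m\mid p-1$ together with $m^2\mid\#\bar E(\mathbb F_p)$ and a Brun--Titchmarsh estimate for $p\equiv 1\,(m)$, again saving a power of $\log x$, and optimising $M$ then produces the combined term $x^{5/6}(\log(qN_Ex))^{2/3}(\log x)^{-1/3}$.

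The main obstacle throughout is the control of the tail $M<m\le\sqrt x$: a naive divisor bound for $\{m\le\sqrt x:E[m]\subseteq\bar E(\mathbb F_p)\}$, or the estimate $\#\{p\le x:m^2\mid\#\bar E(\mathbb F_p)\}\ll x^{3/2}/m^2$, is too lossy to recover the factor $1/\log x$, so one must exploit the lattice structure of $1+m\mathcal{O}_K$ in the CM case and feed the constraint $m\mid p-1$ into a Brun--Titchmarsh estimate in the non-CM case, and in each case verify separately that $m$ close to $\sqrt x$ contributes only $O(\sqrt x\log x)$. A secondary difficulty, felt uniformly in $q$ by every error term, is the bookkeeping of $\mathbb Q(E[m])\cap\mathbb Q(\zeta_q)$, which governs both $\gamma_{E,m}(q,a)$ and $[L_m:\mathbb Q]$ and is what forces the refined quantity $R_{E,q_1}$ in the non-CM estimate.
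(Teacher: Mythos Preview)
Your overall strategy is exactly the paper's: inclusion--exclusion plus GRH Chebotarev for $m\le y$, a Brun--Titchmarsh input for the tail $y<m\le\sqrt x+1$, and refined bookkeeping of $[\Bbb Q(E[m])\Bbb Q(\zeta_q):\Bbb Q]$ through the decomposition $q=q_1q_2$ to replace $M_E^3$ by $R_{E,q_1}$. In the CM case your ``Brun--Titchmarsh in $K$ on $1+m\mathcal O_K$'' is precisely what the paper does, citing the ray-class Brun--Titchmarsh inequality of Hinz--Lodemann \cite{HL93}; one small correction is that the secondary term $x^{1/4}\log N_E$ arises not from the Chebotarev error but from the $O(\log N_E)$ primes $p\mid N_E$ discarded when passing from $\pi_{E,m}(x)$ to the ray-class count $\pi(x,\mathfrak m,\mathbf 1)$, summed over $m\le x^{1/4}$.

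There is, however, a real gap in your non-CM tail. Applying Brun--Titchmarsh only to $p\equiv 1\pmod m$ gives $\pi_{E,m}(x,q,a)\ll x/(\phi(m)\log(x/m))$, and since $\sum_{y<m\le\sqrt x}1/\phi(m)\asymp\log(\sqrt x/y)$, the resulting tail is of size $x$ --- useless. What the paper does (sharpening the Cojocaru--Murty argument \cite{CM04}) is to slice over the value $b$ of $a_p$: the two constraints $m\mid p-1$ and $m^2\mid p+1-b$ force $m\mid b-2$, so there are $O(\sqrt x/m)$ admissible $b$, and for each one applies Brun--Titchmarsh to the congruence $p\equiv b-1\pmod{m^2}$ (modulus $m^2$, not $m$). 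This yields
\[
\pi_{E,m}(x,q,a)\ll\frac{\sqrt x}{m}\cdot\frac{x}{\phi(m^2)\log(9x/m^2)}+\frac{x}{m^2},
\]
and summing gives $\Sigma_2'\ll x^{3/2}/(y^2\log(3x/y^2))+x/y$, which, balanced against $yx^{1/2}\log(qN_Ex)$ at $y\asymp x^{1/3}(\log x\cdot\log(qN_Ex))^{-1/3}$, produces \eqref{pic-aq-GRH}. Your phrase ``Brun--Titchmarsh for $p\equiv 1\,(m)$'' must therefore be replaced by this averaging over $a_p$ together with sieving modulo $m^2$; the Weil-pairing condition $m\mid p-1$ is used only to constrain $b$, not as the modulus for the sieve.
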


To obtain the improved asymptotic \eqref{pic-CM-GRH}, we invoke the work of Hinz and Lodemann \cite{HL93} on the Brun-Titchmarsh inequality for number fields, which allows us to deduce a refined bound \eqref{CM-middle-refined} (cf. \eqref{tri_bd_Em} and \eqref{sum-y-piEm}). This is our key new observation. For the non-CM case, our results rely on an alternative bound for the ``middle range'' $\Sigma'_2$ in \eqref{Sigma2} and a refined bound for the ``tail'' $\Sigma_3$ in \eqref{Sigma3} arising from the sieving argument of Akbal and G\"ulo\u{g}lu \cite[Sec. 3.2]{AG}. When $H(q)/q$ is of a constant size, our result reduces the leading term of \eqref{AG-non-CM-GRH} by a factor of $(\log x)^{1/3}$.\footnote{It shall be noted that in general, the leading error term in \eqref{pic-aq-GRH} is smaller than the one in \eqref{AG-non-CM-GRH} only if $\frac{q}{H(q)} \ll (\log x)^{1/3}$. So, our result does \emph{not} yield a consistent improvement but relies on $q$. It is worth further noting that the reason why we were not able to give a uniform description for the range of $q$, where the improvement is granted, mainly came from the irregular behaviour of $H(q)$ as discussed in \cite[Remark 1]{AG}.} Such a ``log-saving'' is due to the insert of the Burn-Titchmarsh theorem when estimating $\Sigma'_2$. Also, when $q$ and  $M_E$ do not have too many common divisors, $R_{E,q_1}$ gives a better estimate than $M_E^3$. For instance, if $(q,M_E)=1$, $R_{E,q_1}=  \sum_{d\mid M_E } d^3/ \phi(d) $ presents a power-saving for the factors $M_E^3$ in \eqref{AG-non-CM-GRH}.

\begin{remark} 
\noindent (i) Note that  $
\pi_{c}(x,E,q,a)\le \pi(x,q,a) =  \#\{p\le x\mid  p\equiv a\,\mymod{q}\} $ and that under GRH for Dirichlet $L$-functions, for $(a,q)=1$, one has
$$
 \pi(x,q,a) = \frac{1}{\phi(q)}\Li(x) + O(x^{1/2} \log (qx)).
$$
Hence, when $\sqrt{x}\le q \le x $, assuming GRH, we have $\pi_{c}(x,E,q,a) \ll x^{1/2} \log x$, which provides a superior estimate than \eqref{pic-CM-GRH} and \eqref{pic-aq-GRH}.

\noindent (ii)
As may be noticed, applying Theorems \ref{AG-uncond} and \ref{main-thm-c}, for coprime $a,q\in \Bbb{N}$ such that $(a-1, q )$ has no odd prime divisors, if $E/\Bbb{Q}$ has CM and satisfies $[\Bbb{Q}(E[2]):\Bbb{Q}] = 3$, then $\mathfrak{c}_E(q,a)$ is positive unless $\Bbb{Q}(E[2]) \subseteq \Bbb{Q}(\zeta_q)$ and $\sigma_a$ fixes $\Bbb{Q}(E[2])$.\footnote{Under these assumptions, suppose, on the contrary, that $\mathfrak{c}_E(q,a)=0$. Then Theorem \ref{main-thm-c} gives  $\pi_{c}(x,E,q,a)  \ll   x/(\log x)^3$, which contradicts the estimate $\pi_{c}(x,E,q,a)\gg x/(\log x)^{2+\varepsilon}$ given by  Theorem \ref{AG-uncond} (with $A=\varepsilon\in(0,1)$).} In general, determining necessary and sufficient conditions for the positivity of $\mathfrak{c}_E(q,a)$ appears as an interesting question. For example, there is an observation of Serre that $\mathfrak{c}_E \neq 0 $ if and only if $E$ has an irrational 2-torsion point (see \cite[p. 619]{CM04} for a proof). Also, in \cite[Theorems 4 and 6]{AG}, Akbal and G\"ulo\u{g}lu gave some sufficient conditions for the positivity of $\mathfrak{c}_E(q,a)$. More recently, Jones and  Lee \cite{JL} systemically studied the question of which arithmetic progressions $a\,\mymod{q}$ admit the property that for all but finitely many primes $p \equiv a\,\mymod{q}$, $\bar{E}(\Bbb{F}_p)$ is not cyclic. In particular, they gave a criterion for $\mathfrak{c}_E(q,a)=0$ in \cite[Sec. 3.2]{JL}.
\end{remark}

In a slightly different vein, knowing that for any prime $p$ of good reduction, there are natural numbers $d_p$ and $e_p$ such that  $d_p\mid e_p$ and
$$
\bar{E}(\Bbb{F}_p)\simeq \Bbb{Z}/d_p\Bbb{Z} \oplus \Bbb{Z}/e_p\Bbb{Z},
$$
one may also study the behaviours of  $d_p$ and $e_p$ as $p$ varies.\footnote{The exponent  $e_p$ is the largest possible order of points on $\bar{E}(\Bbb{F}_p)$.} Indeed, determining the asymptotic for
$$
\pi_{e}(x,E) =  \sum_{p\le x}e_p
$$
presents an interesting problem.
Freiberg and Kurlberg \cite{FK14} investigated this problem and showed that under GRH, 
$$
\sum_{p\le x}e_p =\e_E \Li(x^2)+ O(x^{19/10}(\log x)^{6/5}), 
$$
where
$$
\e_E=\sum_{m=1}^{\infty}\frac{1}{[\Bbb{Q}(E[m]):\Bbb{Q}]}\sum_{de\mid m}\frac{\mu(d)}{e};
$$
they also showed that
$$
\sum_{p\le x}e_p =\e_E \Li(x^2)+ O\left( \frac{x^2 (\log \log \log x)}{(\log x)( \log \log x)}\right),
$$
unconditionally, if $E/\Bbb{Q}$ is a CM elliptic curve. The errors in these two estimates were improved by Wu \cite{Wu14} to $O(x^{11/6}(\log x)^{1/3})$ and  $O(x^{2}/(\log x)^{15/14})$, respectively. Furthermore, adapting the work of  Akbary and V.K. Murty  \cite{AM}, Kim \cite{K14} used Huxley's Bombieri-Vinogradov theorem for number fields \cite{Hu86} to show that if $E/\Bbb{Q}$ has CM, then for any $A, B> 0$, one has
\begin{align}\label{K14}
\pi_{e}(x,E) =\mathfrak{e}_E\Li(x^2)+O_{A,B}\left(\frac{x^2}{(\log x)^A}\right)
\end{align}
uniformly in $N_E \le (\log x)^B$, where the implied constant only depends on $A$ and $B$.

Inspired by the previously-mentioned work of Akbal and G\"ulo\u{g}lu \cite{AG}, we consider the average of exponents over arithmetic progressions:
$$
\pi_{e}(x,E,q,a) = \sum_{\substack{p\le x\\ p\equiv a\mymod{q} }} e_p
$$
and prove the following generalisation of Kim's work \cite{K14}. (Note that when $q=1$, our result recovers Kim's estimate \eqref{K14}.)

\begin{theorem}\label{main-thm-exp}
Let $E/\Bbb{Q}$ be a CM elliptic curve of conductor $N_E$, and let $a$ and $q$ be coprime natural numbers. Then setting
$$
 \mathfrak{e}_E(q,a) =\sum_{m=1}^{\infty}\sum_{de\mid m}\frac{\mu(d)}{e} 
  \frac{ \gamma_{E,m}(q,a)\mu(m)}{[\Bbb{Q}(E[m])\Bbb{Q}(\zeta_q):\Bbb{Q}]} ,
$$
for any $A, B> 0$, we have
\begin{align}\label{SW-pie}
\pi_{e}(x,E,q,a) = \mathfrak{e}_E(q,a) \Li(x^2)+O_{A,B}\left(\frac{x^2}{(\log x)^A}\right)
\end{align}
uniformly in $qN_E\le (\log x)^B$, where the implied constant only depends on $A$ and $B$.
\end{theorem}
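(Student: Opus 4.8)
The plan is to reduce the estimation of $\pi_e(x,E,q,a)$ to a weighted sum over division fields, exactly as in the classical treatment of the average exponent, and then to invoke the cyclicity-type input supplied by Theorem \ref{main-thm-c} (or rather its proof). First I would recall the elementary identity
$$
e_p = \sum_{d \mid e_p} \phi(d) = \sum_{m} \Big(\sum_{de \mid m} \frac{\mu(d)}{e}\Big)\cdot \#\{m\text{-th...}\},
$$
more precisely the standard manipulation (as in Freiberg--Kurlberg \cite{FK14} and Kim \cite{K14}) which rewrites $\sum_{p\le x,\ p\equiv a(q)} e_p$ as $\sum_{m\ge 1} f(m)\,\pi_E(x,m,q,a)$, where $f(m) = \sum_{de\mid m}\mu(d)/e$ and $\pi_E(x,m,q,a)$ counts primes $p\le x$, $p\equiv a\,\mymod{q}$, $p\nmid N_E$, for which $E[m]\subseteq \bar E(\Bbb F_p)$; the latter condition is equivalent to $p$ splitting completely in $\Bbb Q(E[m])$, and combined with $p\equiv a\,\mymod q$ it is governed by a Chebotarev condition in the compositum $\Bbb Q(E[m])\Bbb Q(\zeta_q)$. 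The key quantitative point is the bound $m^2 \ll p + 1 \le x+1$ forced by the Hasse bound once $E[m]\subseteq \bar E(\Bbb F_p)$, so the sum over $m$ is effectively truncated at $m \le (x+1)^{1/2}$, which is why the main term comes out as $\mathfrak e_E(q,a)\Li(x^2)$ rather than $\Li(x)$.

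Next I would split the $m$-sum at a parameter $z = z(x)$, to be chosen as a suitable power of $\log x$. For $m\le z$ I would apply the Bombieri--Vinogradov theorem for number fields (Huxley \cite{Hu86}, or Wilson \cite{Wil}), exactly as in Akbary--V.K. Murty \cite{AM} and Kim \cite{K14}, to each field $\Bbb Q(E[m])\Bbb Q(\zeta_q)$ uniformly over the relevant conjugacy-class data; here the CM hypothesis is essential because for CM curves the division fields are abelian over $K$, so the Chebotarev problem reduces to a Hecke-character / Dirichlet-character problem over $K$ where the Bombieri--Vinogradov-type theorem applies with the needed uniformity in the modulus. The degrees $[\Bbb Q(E[m])\Bbb Q(\zeta_q):\Bbb Q]$ grow like $m^2\phi(q)$ up to bounded multiplicative constants and known divisor-type factors, and $f(m)\ll \tau(m)$, so the resulting main term is $\Li(x^2)\sum_{m\le z} f(m)\gamma_{E,m}(q,a)\mu(m)/[\Bbb Q(E[m])\Bbb Q(\zeta_q):\Bbb Q]$; extending this sum to infinity costs only $O(x^2 z^{-1}(\log x)^{O(1)})$ since the tail of $\mathfrak e_E(q,a)$ converges absolutely at rate $\asymp \sum_{m>z} \tau(m)/(m^2\phi(q)) \ll z^{-1}\log z$. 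The hypothesis $qN_E\le(\log x)^B$ is used precisely to keep the conductor of the compositum polylogarithmic so that the Bombieri--Vinogradov error absorbs it.

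For the tail $z < m \le (x+1)^{1/2}$ I would not use Chebotarev at all but instead bound $\pi_E(x,m,q,a)$ crudely: any prime counted satisfies $p\equiv a\,\mymod q$ and lies in a set of density $\le 1/[\Bbb Q(E[m])\Bbb Q(\zeta_q):\Bbb Q]$, so either one controls it by the Brun--Titchmarsh inequality for number fields (Hinz--Lodemann \cite{HL93}) applied to $\Bbb Q(E[m])\Bbb Q(\zeta_q)$, giving $\pi_E(x,m,q,a)\ll x/(\phi(q)\,m^2\log(x/(qm)^{O(1)}))$, or one uses the even cruder square-sieve / Hasse-bound argument of Freiberg--Kurlberg to get $\pi_E(x,m,q,a)\ll x/m^2 + (\text{smaller})$, and then sums $\sum_{m>z} f(m)\, x/m^2 \ll x(\log z)/z \cdot (\log x)^{O(1)}$, which is $O_{A}(x^2/(\log x)^A)$ once $z$ is a large enough power of $\log x$ — wait, that only gives $x$, not $x^2$; the point is that when summed with the $\Li(x^2)$ normalization the tail is genuinely of size $x^2 z^{-1}(\log x)^{O(1)}$ because of the contribution of primes $p$ near $x$ weighted by $e_p\asymp p$. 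In any case, choosing $z = (\log x)^{A+B+C}$ for a suitable absolute constant $C$ makes both the truncation error in the main term and the tail error $O_{A,B}(x^2/(\log x)^A)$, and collecting the two ranges yields \eqref{SW-pie}. The main obstacle, as in \cite{K14}, is ensuring that the Bombieri--Vinogradov theorem for the compositum fields $\Bbb Q(E[m])\Bbb Q(\zeta_q)$ holds with error uniform simultaneously in $m\le z$ and in the fixed (polylogarithmic) $q$ and $N_E$; this requires care about the conductor-discriminant bookkeeping of $\Bbb Q(E[m])\Bbb Q(\zeta_q)$ and about the factor $\gamma_{E,m}(q,a)$ correctly recording when the Frobenius-at-$p$ conjugacy class in $\Gal(\Bbb Q(E[m])\Bbb Q(\zeta_q)/\Bbb Q)$ is simultaneously trivial on $\Bbb Q(E[m])$ and equal to $\sigma_a$ on $\Bbb Q(\zeta_q)$ — i.e.\ that the splitting condition is nonempty, which is exactly the role of $\gamma_{E,m}(q,a)$ and is handled by the same intersection-of-fields analysis used to define $\mathfrak c_E(q,a)$.
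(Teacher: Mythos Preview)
Your outline is sound and would prove the theorem, but it follows a genuinely different route from the paper. You propose to handle the range $m\le z$ via Huxley's Bombieri--Vinogradov theorem for number fields, as in Akbary--V.K.~Murty \cite{AM} and Kim \cite{K14}. The paper deliberately avoids this: its key input for small $m$ is instead V.K.~Murty's effective Chebotarev density theorem under Artin's conjecture (Theorem~\ref{VKMurtythm}), after first verifying in Lemma~\ref{AC} that every compositum $\Bbb{Q}(E[m])\Bbb{Q}(\zeta_q)/\Bbb{Q}$ is meta-abelian with biggest character degree at most $2$, so that Artin's conjecture holds there. This yields the Siegel--Walfisz-type estimate \eqref{CDT-3} for $\pi_{E,m}(x,q,a)$ uniformly in $m\le(\log x)^A$ and $qN_E\le(\log x)^B$; the remainder of the proof is then just Abel summation (to pass from $\pi_{E,m}$ to $\sum_{m\mid d_p}p$), the elementary CM tail bound \eqref{tri_bd_Em}, and the choice $y=(\log x)^A$.

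What each approach buys: your BV route stays closer to the literature, but you are right to flag as the main obstacle that Huxley's average must now absorb the extra cyclotomic twist by $\Bbb{Q}(\zeta_q)$ uniformly in $q\le(\log x)^B$; this is a genuine (if routine) extension of Kim's $q=1$ argument, since the effective modulus on the $K$-side depends on both $m$ and $q$. The paper's Artin-conjecture route sidesteps this entirely and, more importantly, isolates the possible Landau--Siegel zero as an explicit term in \eqref{CDT}; this is precisely what later enables the conditional refinement of Theorem~\ref{main-thm-NLS}, which a BV average cannot see. Two minor corrections to your write-up: the weight is exactly $\sum_{de\mid m}\mu(d)/e=1/m$ (not merely $\ll\tau(m)$), which cleans up your tail estimate; and the $\Li(x^2)$ arises from the factor $p$ in $e_p=(p+1-a_p)/d_p$ via Abel summation, not from the truncation $m\le\sqrt{x}+1$.
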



Moreover, we have the following extension of the works of Freiberg-Kurlberg and Wu mentioned earlier, which particularly presents refinements of their results by taking $q = 1$.

\begin{theorem} \label{main-thm-exp-cond}
 Let $E/\Bbb{Q}$ be an elliptic curve of conductor $N_E$, and let $a$ and $q$ be coprime natural numbers. Assume that GRH is valid for the Dedekind zeta function of  $\Bbb{Q}(E[m])\Bbb{Q}(\zeta_q)$ for every square-free $m$. 
 Then 
$$
\pi_{e}(x,E,q,a)  = \mathfrak{e}_E(q,a) \Li(x^2) +x\mathcal{E}_e(x),
$$  
where  if $E$ is with CM by the full ring of integers $\mathcal{O}_K$ of an imaginary quadratic field $K =\Bbb{Q}(\sqrt{-D})$, $\mathcal{E}_e(x)$ satisfies both bounds
\begin{equation*} 
\mathcal{E}_e(x)  \ll x^{3/4} \frac{( \log  ( qN_Ex))^{1/2}}{(\log x)^{1/2}}
  + x^{1/4} \log N_E 
\end{equation*}
and
\begin{align}\label{pie-CM-GRH}
 \begin{split}
\mathcal{E}_e(x) 
&\ll x^{3/4} \left(\frac{\log(qN_Ex) G_D(a,q)}{q^3} \right)^{1/2}
+x^{3/4} \left(\frac{\log(qN_Ex)}{\log x} \right)^{1/2}\\
&+x^{1/2} q \log(qN_Ex) + x^{1/2} \left( \frac{1}{q} +\frac{\log x}{q^2} \right)  G_D(a,q), 
  \end{split}
\end{align}
with the same $G_D(a,q)$ as in Theorem \ref{AG-main-thm}; and if $E$ is non-CM, $\mathcal{E}_e(x)$ satisfies both estimates
\begin{equation}\label{pie-nonCM-GRH-1}
\mathcal{E}_e(x)\ll x^{5/6} \frac{ (\log (qN_E x))^{2/3} }{ (\log x)^{1/3}}  +  \frac{x^{1/2}}{q} 
\end{equation}
and
\begin{align}\label{pie-nonCM-GRH-2}
 \begin{split}
\mathcal{E}_e(x)&\ll  x^{5/6} \left( \frac{H(q) (\log  ( qN_Ex))^2}{q}\right)^{1/3} +x^{5/8}  \left( \frac{\tau_2(q_2)(\log  ( qN_Ex))^3}{\phi(q_2)\log x }S_{E}\right)^{1/4} \\
&+x^{1/2} q\log  ( qN_Ex) + \frac{\tau_2(q_2)}{\phi(q_2) x^{1/2}\log x }S_{E},
  \end{split}
\end{align}
where $S_{E} = \sum_{d\mid M_E^\infty }  \frac{B_E}{d \phi(d) } $, and $B_E$ is the constant, depending only on $E$, as in \cite[Proposition  3.2]{FK14}, such that $ B_E \cdot [\Bbb{Q}(E[m]):\Bbb{Q}]\ge |\GL_2(\Bbb{Z}/m\Bbb{Z})|$ for every $m$.\footnote{The existence of $ B_E$ is due to  Serre's open image theorem. It may be possible to determine $ B_E$ in terms of $N_E$ by an effective version of Serre's open image theorem (see, e.g., \cite{MW} and references therein) or  the index bound for the image of the ``adelic'' representation attached to $E$ (see \cite{Lo15}). However, as it seemingly requires a delicate algebraic and representation-theoretic argument that appears beyond this article's scope, we shall reserve it as a future project.}
\end{theorem}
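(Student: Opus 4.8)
The plan is to express $e_p$ via a Freiberg--Kurlberg type divisor identity, thereby reducing $\pi_e(x,E,q,a)$ to a weighted combination of exactly the Chebotarev sums already controlled in the proof of Theorem~\ref{main-thm-c-GRH}, but with each prime $p$ now carrying the extra weight $|\bar{E}(\Bbb{F}_p)|=p+1-a_p$ --- this is what produces the normalisation $\pi_e(x,E,q,a)=\mathfrak{e}_E(q,a)\Li(x^2)+x\mathcal{E}_e(x)$. Concretely, recall that $\bar{E}(\Bbb{F}_p)\cong\Bbb{Z}/d_p\Bbb{Z}\oplus\Bbb{Z}/e_p\Bbb{Z}$ with $d_p\mid e_p$ and $d_pe_p=p+1-a_p$, so $d_p\le 1+\sqrt x$ by Hasse; for $p\nmid mN_E$ one has $E[m]\subseteq\bar{E}(\Bbb{F}_p)$ iff $m\mid d_p$, equivalently $\mathrm{Frob}_p$ acts trivially on $\Bbb{Q}(E[m])$, while $p\equiv a\,\mymod{q}$ is the condition $\mathrm{Frob}_p|_{\Bbb{Q}(\zeta_q)}=\sigma_a$. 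Letting $h=\mu*(n\mapsto 1/n)$ (so $h(1)=1$, $h(\ell^k)=\ell^{-k}(1-\ell)$ for $k\ge1$, and $\sum_{d\mid n}h(d)=1/n$) we get $e_p=(p+1-a_p)\sum_{m\mid d_p}h(m)$, whence
\[
\pi_e(x,E,q,a)=\sum_{1\le m\le 1+\sqrt x}h(m)\,T_m(x),\qquad T_m(x):=\sum_{\substack{p\le x,\ p\equiv a\,\mymod{q}\\ E[m]\subseteq\bar{E}(\Bbb{F}_p)}}(p+1-a_p);
\]
equivalently one may run the inclusion--exclusion of \cite[Sec.~3.2]{AG} directly.

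Fix a cutoff $z$, a small power of $x$ to be optimised. For $m\le z$ the primes counted by $T_m(x)$ are those with $\mathrm{Frob}_p=(I,\sigma_a)$ in $\mathrm{Gal}(L_m/\Bbb{Q})$, $L_m:=\Bbb{Q}(E[m])\Bbb{Q}(\zeta_q)$, which is a single element when $\gamma_{E,m}(q,a)=1$ and the empty set otherwise. Writing $p+1-a_p=(p+1)-a_p$, applying the effective Chebotarev density theorem under GRH to $L_m$ and then partial summation to absorb the weight $p+1$ gives $T_m(x)=\frac{\gamma_{E,m}(q,a)}{[L_m:\Bbb{Q}]}\Li(x^2)+O(x^{3/2}\log(mqN_Ex))$, the $a_p$-weight being dominated by this error under GRH (via GRH for the symmetric-square $L$-functions over $L_m$, or crudely from $|a_p|\le2\sqrt p$ and the GRH count). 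Summing against $h(m)$ over $m\le z$ and completing the series $\sum_m h(m)\gamma_{E,m}(q,a)/[L_m:\Bbb{Q}]$ to infinity recovers $\mathfrak{e}_E(q,a)\Li(x^2)$: the identity $\sum_{de\mid m}\mu(d)/e=1/m$ shows this series equals the closed form defining $\mathfrak{e}_E(q,a)$, while its tail beyond $z$ is $\ll z^{-1}$ in the CM case (where $[\Bbb{Q}(E[m]):\Bbb{Q}]\gg m^2$) and $\ll B_E\sum_{m>z}|h(m)|/|\GL_2(\Bbb{Z}/m\Bbb{Z})|$ in the non-CM case. The accumulated error over $m\le z$ is $\ll z\,x^{3/2}\log(zqN_Ex)$.

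For the tail $z<m\le 1+\sqrt x$ we use $|h(m)|\ll\tau_2(m)$ and $T_m(x)\ll x\cdot\#\{p\le x:\ p\equiv a\,\mymod{q},\ E[m]\subseteq\bar{E}(\Bbb{F}_p)\}$; the Weil pairing forces $p\equiv1\,\mymod{m}$, so $p$ lies in a fixed class modulo $\mathrm{lcm}(m,q)$ and a fixed Chebotarev class for $\Bbb{Q}(E[m])$. As in the proof of Theorem~\ref{main-thm-c-GRH} there are two ways to bound this, giving the two asserted estimates for $\mathcal{E}_e(x)$ in each of the CM and non-CM cases: (a) insert the Brun--Titchmarsh inequality for number fields of Hinz--Lodemann \cite{HL93} applied to $\Bbb{Q}(E[m])$ (degree $\asymp m^2$ for CM, $\asymp m^4$ for non-CM), which yields the $(\log x)^{-1/2}$, respectively $(\log x)^{-1/3}$, savings and the terms $x^{3/4}(\log(qN_Ex)/\log x)^{1/2}$ and $x^{5/6}((\log(qN_Ex))^2/\log x)^{1/3}$ --- this is the mechanism behind \eqref{pic-CM-GRH} and \eqref{pic-aq-GRH}; (b) use the middle-range/tail sieve decomposition of \cite[Sec.~3.2]{AG} with the extra factor $p\le x$ carried through, producing the bounds involving $H(q)$, $\tau_2(q_2)/\phi(q_2)$ and the constant $S_E=\sum_{d\mid M_E^\infty}B_E/(d\phi(d))$; this last arises exactly where $1/[\Bbb{Q}(E[m]):\Bbb{Q}]$ must be bounded by $B_E/|\GL_2(\Bbb{Z}/m\Bbb{Z})|$ through \cite[Proposition~3.2]{FK14}, playing here the role that $M_E^3$ and $R_{E,q_1}$ play in \eqref{AG-non-CM-GRH} and \eqref{pic-aq-GRH}. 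Optimising $z$, handling $m=1$ separately by GRH for Dirichlet $L$-functions (contributing $\Li(x^2)/\phi(q)$ plus lower-order terms consistent with the $x^{5/4}\log N_E$, resp.\ $x^{3/2}/q$, terms in the statement), and recording the CM and non-CM degree growths, assembles the four displayed estimates.

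The chief difficulty, compared with Theorem~\ref{main-thm-c-GRH}, is that the weight $p\asymp x$ inflates every error by a factor of $x$, leaving no slack: one must genuinely re-run the Hinz--Lodemann Brun--Titchmarsh input and the Akbal--G\"ulo\u{g}lu middle-range/tail estimates with the $p$-weight handled by partial summation, keeping all dependence on $q$ (through $\phi(q)$, $\phi(q_2)$, $\tau_2(q_2)$, $H(q)$) and on $E$ (through $M_E$, $B_E$, $S_E$) uniform and in the stated shapes --- in particular pinning down the secondary terms $x^{1/2}/q$ and $\tau_2(q_2)S_E/(\phi(q_2)x^{1/2}\log x)$ is delicate. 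The genuinely new feature relative to the cyclicity proof is that the outer sum ranges over all $m$, not merely squarefree ones, with weights $h(m)$ in place of $\mu(m)$, which forces the divisor-uniform bound $B_E\cdot[\Bbb{Q}(E[m]):\Bbb{Q}]\ge|\GL_2(\Bbb{Z}/m\Bbb{Z})|$ valid for every $m$, hence the sum $S_E$ over $d\mid M_E^\infty$; keeping the $a_p$-weighted analogues of these sums below the main error, and recording the conditionally convergent rearrangement $\sum_m h(m)\gamma_{E,m}(q,a)/[L_m:\Bbb{Q}]=\mathfrak{e}_E(q,a)$ via $\sum_{de\mid m}\mu(d)/e=1/m$, are routine but needed.
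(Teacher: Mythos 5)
Your proposal is essentially the paper's own proof, with only cosmetic differences. The paper separates $e_p = p/d_p + (1-a_p)/d_p$ at the outset, dispatches the second piece trivially by Hasse as $\ll x^{3/2}/q$, and then applies the identity $1/m = \sum_{de\mid m}\mu(d)/e$ to the $p/d_p$ sum alone; you instead keep the full weight $p+1-a_p$ inside $T_m(x)$ and propose to control the $a_p$-contribution by GRH (or trivially) at the end. These are equivalent; the paper's early split is cleaner since it avoids any appeal to symmetric-square $L$-functions. From there the structure you describe — partial summation to trade the $p$-weight for $\Li(x^2)$ versus $\Li(x)$, the GRH Chebotarev bound \eqref{GRH-CDT} on $L_m=\Bbb{Q}(E[m])\Bbb{Q}(\zeta_q)$ for $m\le z$, the Hinz--Lodemann Brun--Titchmarsh input for the CM refinement, the Akbal--G\"ulo\u{g}lu sieve decomposition for the $G_D(a,q)$ and $H(q)$ bounds, and the replacement of the divergent $\phi([d,q_1])$ lower bound by the Freiberg--Kurlberg constant $B_E$ over $d\mid M_E^\infty$ to make the non-squarefree tail converge to $S_E$ — matches the paper's Sections 5 and 6 ingredient for ingredient, including the two-bound ``min'' trick for the middle range.

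Two small misattributions worth flagging: the $x^{1/2}/q$ term in \eqref{pie-nonCM-GRH-1} (equivalently $x^{3/2}/q$ before dividing by $x$) arises from the Hasse bound on $\sum_p(1-a_p)/d_p$, not from the $m=1$ Chebotarev term as you suggest; and the $x^{1/4}\log N_E$ term in the first CM bound is the cost of passing from $\pi_{E,m}$ to the ray-class count $\pi(x,\mathfrak m,\mathbf 1)$ inside the Hinz--Lodemann step, not a lower-order $m=1$ contribution. Neither affects the validity of the argument.
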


\begin{remark} 
(i) It is worthwhile noting that our unconditional estimates \eqref{AMe-generalisation} and \eqref{SW-pie} come from the effective version of the Chebotarev density theorem established by V.K. Murty \cite{VKMu97}. This is the main observation in the proofs of Theorems \ref{main-thm-c} and \ref{main-thm-exp} that verifying Artin's (holomorphy) conjecture for the Galois extensions $L/\Bbb{Q}$ involved (see Lemma \ref{AC}) allows us to remove $n_L$, the degree of $L$, in the error term of the effective version of the Chebotarev density theorem \eqref{LO-CDT}  due to Lagarias and  Odlyzko \cite{LO77}. 

\noindent (ii) Compared to the works \cite{AM,K14}, our argument does not rely on Huxley's Bombieri-Vinogradov theorem for number fields. Still, it obtains results of the same strength (which also particularly gives an improvement of Cojocaru's work \cite{Co03}). Moreover, our method allows us to express the errors in terms of the location of the possible Landau-Siegel zeros of Dirichlet $L$-functions. This feature could not be seen from the method relying on the Bombieri-Vinogradov theorem for number fields, and it leads to a conditional resolution of a question of Akbary and V.K. Murty on improving the error term in their estimate \eqref{AMe} as discussed below.
\end{remark}

Akbary and V.K. Murty \cite{AM} remarked that their theorem can be viewed as an elliptic analogue of the following weak form of the classical Siegel-Walfisz theorem: for any $(a,q)=1$, one has
$$
\pi(x,q,a)
 = \frac{1}{\phi(q)} \Li(x) +  O_{A,B}\left(\frac{x}{(\log x)^A} \right),
$$
uniformly for $q \le (\log x)^B$, for any given $A,B>0$. Moreover, recalling that the Siegel-Walfisz theorem, in fact, states that for any $B > 0$, there exists $c_0 = c_{0,B}$ such that 
$$
\pi(x,q,a)
 = \frac{1}{\phi(q)} \Li(x) +   O\left(  x\exp( -c_0 \sqrt{\log x}  ) \right)
$$
uniformly in $q \le (\log x)^B$, Akbary and V.K. Murty noted that it seems quite unclear how to extend this to their setting (i.e., to the cyclicity problem). In Section \ref{final-rmk}, we shall show that such an expected stronger estimate follows from the non-existence of the Landau-Siegel zeros of Dirichlet $L$-functions. Indeed, we have the following conditional result.

 \begin{theorem}\label{main-thm-NLS}
Let $E/\Bbb{Q}$ be a CM elliptic curve of conductor $N_E$, and let $a$ and $q$ be coprime natural numbers. Assume that there exists a constant $S\ge -1$ such that for any $Q\in\Bbb{N}$ and any real primitive character $\chi$ modulo $Q$,
\begin{equation}\label{lower-bd-L}
L(1,\chi) \gg (\log Q)^{-S},
\end{equation}
where $L(s,\chi)$ is the Dirichlet $L$-function attached to $\chi$, and the implied constant is absolute. Then there is an absolute constant $c_1>0$ so that uniformly for  $ \log (qN_E )  \ll (\log x)^{1/(2S+4)}$,\footnote{In fact, such uniformity can be precisely written as
$qN_E   \le \exp\left( \frac{1}{2\kappa} (\log x)^{1/(2S+4)}\right)$ with the same $\kappa$ as in \eqref{E-logQ-bd}.} we have
\begin{align*}
\pi_c(x,E,q, a)
= \mathfrak{c}_E(q,a) \Li(x) 
+ O\left(  x \exp \left(- c_1 (\log x)^{1/(2S+4)}   \right)\right)
\end{align*}
and
\begin{align*}
  \begin{split}
\pi_{e}(x,E,q,a)
=  \mathfrak{e}_E(q,a)\Li(x^2)+ O\left(   x^2 \exp \left( -c_1 (\log x)^{1/(2S+4)}   \right)\right).
 \end{split}
\end{align*}
\end{theorem}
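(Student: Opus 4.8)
The plan is to re‑run the sieve behind Theorems~\ref{main-thm-c} and \ref{main-thm-exp}, but to keep the contribution of a possible Landau--Siegel zero explicit inside the effective Chebotarev density theorem and then to delete it using the lower bound \eqref{lower-bd-L}. First I would recall the set‑up: writing $L_m=\Bbb{Q}(E[m])\Bbb{Q}(\zeta_q)$ and letting $C_{m,a}$ be the set, of cardinality $\gamma_{E,m}(q,a)\le 1$, of $g\in\Gal(L_m/\Bbb{Q})$ that are trivial on $\Bbb{Q}(E[m])$ and equal $\sigma_a$ on $\Bbb{Q}(\zeta_q)$, inclusion--exclusion over the primes $\ell$ with $\Bbb{Z}/\ell\hookrightarrow\bar{E}(\Bbb{F}_p)$ gives
\[
\pi_c(x,E,q,a)=\sum_{m}\mu(m)\,\pi_{C_{m,a}}(x,L_m/\Bbb{Q})+O(\omega(N_E)),
\]
where $\pi_{C}(x,L/\Bbb{Q})=\#\{p\le x:\ p\text{ unramified in }L,\ \operatorname{Frob}_p\subseteq C\}$ and, by Hasse's bound, the sum runs over $m\le 2\sqrt{x}$. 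I would fix a truncation level $y$ (chosen below) and split this sum at $m=y$; the part $y<m\le 2\sqrt{x}$ is bounded exactly as in the proof of Theorem~\ref{main-thm-c}, and since that bound only decreases with $y$ it is $\ll x\exp(-c(\log x)^{1/(2S+4)})$ once $\log y\gg(\log x)^{1/(2S+4)}$. The very same reduction applies to $\pi_e(x,E,q,a)$ after expanding $e_p$ via the divisor identity of Freiberg and Kurlberg (\cite[Proposition~3.2]{FK14}, already invoked in Theorem~\ref{main-thm-exp-cond}): the Chebotarev counts then appear weighted by $N_p=\#\bar{E}(\Bbb{F}_p)=p+1-a_p$, whose $p$‑part produces the scale $\Li(x^2)$ by partial summation, whose $a_p$‑part is $\ll x^{1/2}\pi_{C_{m,a}}(x,L_m/\Bbb{Q})$ since $|a_p|\le 2\sqrt{x}$, and whose magnitude $N_p\ll x$ is what replaces $x$ by $x^2$ in the final error.

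For $m\le y$ the field $L_m$ is abelian over the imaginary quadratic field $K$, so $\zeta_{L_m}$ and $\zeta_{L_m\Bbb{Q}(\zeta_q)}$ factor into Hecke $L$‑functions of $K$ of analytic conductor $\ll(mqN_E)^{O(1)}$; in particular Artin's holomorphy conjecture holds for $L_m/\Bbb{Q}$ (Lemma~\ref{AC}). Applying V.K.~Murty's effective Chebotarev theorem \cite{VKMu97} in the holomorphy‑assisted form already used for Theorems~\ref{main-thm-c} and \ref{main-thm-exp} — so that the degree $n_{L_m}$ disappears from the error — and keeping the possible exceptional zero visible gives
\[
\pi_{C_{m,a}}(x,L_m/\Bbb{Q})=\frac{|C_{m,a}|}{[L_m:\Bbb{Q}]}\Li(x)+O\!\Big(m^{2}\phi(q)\big(x^{\beta_m}+x\exp(-c\sqrt{\log x/\log(mqN_E)}\,)\big)\Big),
\]
where $\beta_m<1$, if present, is a real zero of one of those Hecke $L$‑functions. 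The key new step is to control $\beta_m$: a real quadratic Hecke character of an imaginary quadratic field cuts out a biquadratic extension of $\Bbb{Q}$, so its $L$‑function is a product of Dirichlet $L$‑functions and $\beta_m$ is a real zero of $L(s,\chi)$ for some real primitive Dirichlet character $\chi$ of conductor $Q\ll(mqN_E)^{O(1)}$; the standard inequality $1-\beta_m\gg L(1,\chi)(\log Q)^{-2}$ combined with \eqref{lower-bd-L} then yields $1-\beta_m\gg(\log(mqN_E))^{-(S+2)}$, whence $x^{\beta_m}\ll x\exp\!\big(-c(\log x)(\log(mqN_E))^{-(S+2)}\big)$.

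I would then choose $y=\exp\!\big((\log x)^{1/(2S+4)}\big)$; since by hypothesis $\log(qN_E)\ll(\log x)^{1/(2S+4)}$ — this, with the constant $\kappa$ of \eqref{E-logQ-bd}, being the precise uniformity in the footnote — we get $\log(mqN_E)\ll(\log x)^{1/(2S+4)}$ throughout $m\le y$. For each such $m$ the zero‑free‑region term is then $\ll x\exp(-c(\log x)^{(2S+3)/(2(2S+4))})$ and, by the previous paragraph, $x^{\beta_m}\ll x\exp(-c(\log x)^{1-(S+2)/(2S+4)})=x\exp(-c(\log x)^{1/2})$; summing over the $\le y$ values of $m$ multiplies these by $y^{3}\phi(q)\ll\exp(O((\log x)^{1/(2S+4)}))$, and since $1/(2S+4)$ is strictly smaller than both exponents $1/2$ and $(2S+3)/(2(2S+4))$ occurring here, the total Chebotarev error is $\ll x\exp(-c'(\log x)^{1/(2S+4)})$. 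The truncated main term equals $\mathfrak{c}_E(q,a)\Li(x)$ up to the tail $\Li(x)\sum_{m>y}[L_m:\Bbb{Q}]^{-1}\ll x/(y\log x)\ll x\exp(-(\log x)^{1/(2S+4)})/\log x$, using $[L_m:\Bbb{Q}]\gg[\Bbb{Q}(E[m]):\Bbb{Q}]\gg m^{2}$ for squarefree $m$ (valid because $E$ has CM). Collecting all contributions proves the estimate for $\pi_c$; the identical argument with the $N_p$‑weighting and the constant $\mathfrak{e}_E(q,a)$ proves it for $\pi_e$, with $x$ replaced by $x^2$.

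The main obstacle is the uniform bookkeeping surrounding the effective Chebotarev input: one must check that for \emph{every} $m\le y$ all $L$‑functions dividing $\zeta_{L_m\Bbb{Q}(\zeta_q)}$ have analytic conductor $\ll(mqN_E)^{O(1)}$ — so that invoking Artin holomorphy (Lemma~\ref{AC}) leaves no residual dependence on $n_{L_m}$ or on the discriminant $d_{L_m}$ inside the exponential — and that a possible exceptional zero genuinely reduces, uniformly in $m\le y$, to a real primitive Dirichlet $L$‑function of conductor $\ll(mqN_E)^{O(1)}$. It is precisely this reduction, fed into \eqref{lower-bd-L}, that forces the power $S+2$ in the bound for $1-\beta_m$, and the choice of sieve truncation $\log y\asymp(\log x)^{1/(2S+4)}$ — the largest level for which the $O((\log x)^{1/(2S+4)})$‑sized losses incurred when summing over $m$ stay safely below the savings from $x^{\beta_m}$ and from the zero‑free region — delivers both the claimed exponent and the stated uniformity range. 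No Deuring--Heilbronn‑type argument is needed, since $y$ is small enough that multiplying the (unique, per modulus) exceptional‑zero bound by $y$ costs nothing.
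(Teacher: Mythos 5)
Your overall strategy matches the paper's: invoke V.K.~Murty's effective Chebotarev theorem (via Lemma~\ref{AC}) with the exceptional zero kept explicit, identify $\beta_1$ with a zero of a real primitive Dirichlet $L$‑function of conductor $Q_1$ satisfying $\log Q_1\ll\log(mqN_E)$ via \eqref{E-logQ-bd}, use the classical mean‑value estimate $1-\beta_1\gg L(1,\chi_1)(\log Q_1)^{-2}$ together with \eqref{lower-bd-L} to get $1-\beta_1\gg(\log Q_1)^{-(S+2)}$, observe this yields $x^{\beta_1}\ll x\exp(-c\sqrt{\log x})$ once $\log Q_1\ll(\log x)^{1/(2S+4)}$, and truncate the sieve at $y=\exp(c'(\log x)^{1/(2S+4)})$, so that the tail $x/y$ is what determines the final exponent. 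The detour through ``a real quadratic Hecke character of an imaginary quadratic field cutting out a biquadratic extension'' is harmless but unnecessary: Murty's Theorem~\ref{VKMurtythm} already says the exceptional $\chi_1$ is real and abelian \emph{as a character of $\Gal(L_m/\Bbb{Q})$}, so Artin reciprocity immediately makes it a real Dirichlet character, with conductor bounded via \eqref{cond-bd}--\eqref{logQ-bd}; that is the paper's route and it sidesteps any question of whether the quadratic extension of $K$ is Galois over $\Bbb{Q}$.

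There is, however, a genuine error in your stated Chebotarev remainder, and it breaks your own argument for the most interesting parameter range. You write the zero‑free‑region contribution as $x\exp\bigl(-c\sqrt{\log x/\log(mqN_E)}\,\bigr)$, which has the shape of the unconditional Lagarias--Odlyzko bound (degree in the denominator), not of Murty's bound under Artin's conjecture. Under Artin holomorphy, and once $(\log(mqN_E))^2\ll\log x$ (which your truncation guarantees), \eqref{VKMurty-est} gives $x\exp(-c_2'\sqrt{\log x})$ with \emph{no} dependence on $m,q,N_E$ in the exponent; this is exactly \eqref{CDT} in the paper. Your version produces the exponent $(2S+3)/(2(2S+4))$, and you then claim $1/(2S+4)$ is strictly smaller than this. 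But $1/(2S+4)<(2S+3)/(2(2S+4))$ is equivalent to $S>-1/2$, so the claim fails for $-1\le S\le -1/2$, in particular for $S=-1$ (no Landau--Siegel zero), where $1/(2S+4)=1/2$ and $(2S+3)/(2(2S+4))=1/4$: your zero‑free‑region term $x\exp(-c(\log x)^{1/4})$ then \emph{exceeds} the target $x\exp(-c_1(\log x)^{1/2})$, and the additional $y^3\phi(q)\ll\exp(O((\log x)^{1/2}))$ factor from summing over $m$ makes matters worse, not better. The fix is simply to use the correct Murty bound $x\exp(-c_2'\sqrt{\log x})$, which dominates $x/y$ for every $S\ge -1$ (with a constant adjustment when $S=-1$), as in \eqref{Z-bd-CDT}. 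Similarly, the $m^2\phi(q)$ prefactor in your Chebotarev estimate does not come from Theorem~\ref{VKMurtythm} (there the analogue is $|C|^{1/2}\le 1$ and a harmless $(\log(\mathcal{M}x))^2$); it happens to be absorbable here but is not the right statement to carry forward.
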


\begin{remark} 
It is well-known that the non-existence of the Landau-Siegel zeros implies that \eqref{lower-bd-L} holds with $S=-1$ (see \cite[\textsection 1]{Go75}). In his recent preprint \cite{Zh22}, Zhang announced that $S = 2022$ is admissible; however, as \cite{Zh22} is still unpublished,  Theorem \ref{main-thm-NLS} shall be treated with caution as a conditional result.
\end{remark}

The rest of the article is organised as follows. In the next section, we will collect the necessary preliminaries to prove our results (particularly, we will discuss the effective version of the Chebotarev density theorem established by V.K. Murty). Theorems \ref{main-thm-c}, \ref{main-thm-c-GRH}, \ref{main-thm-exp}, and \ref{main-thm-exp-cond} will be proved in Sections \ref{proof},  \ref{proof-pic-GRH}, \ref{proof-2}, and \ref{proof-pie-GRH}, respectively. In the last section, we will prove Theorem \ref{main-thm-NLS}.


\section{Preliminaries}

\subsection{Artin's (holomorphy) conjecture and the Chebotarev density theorem}

In this section, we shall recall the effective version of the Chebotarev density theorem established by V.K. Murty. To state his result, we require the following notation. For a number field $F$, we let $d_F$ and $n_F$ denote the absolute discriminant and degree of $F$, respectively. Let $L/K$ be a Galois extension of number fields with Galois group $G$. The set of irreducible characters of $G$ will be denoted by $\Irr(G)$, and the biggest character degree of $G$ is defined by $b(G)=\max_{\chi\in\Irr(G)} \chi(1)$. Also, for each $\chi \in\Irr(G)$, we let $\mathfrak{f}_{\chi}$ stand for the (global) Artin conductor of $\chi$, and we set
$$
\mathcal{A}= \mathcal{A}(L/K) =\max_{\chi\in\Irr(G)}A_{\chi},
$$
where $A_\chi = d_K^{\chi(1)} \norm(\mathfrak{f}_{\chi})$. Moreover, we recall from \cite[Proposition 2.5]{MMS88} that
\begin{equation}\label{cond-bd}
\log \norm(\mathfrak{f}_{\chi}) \le 2\chi(1)n_K \left(\sum_{p\in P(L/K)} \log p  +  \log (n_{L}/n_K)\right),
\end{equation}
where $P(L/K)$ denotes the set of rational primes $p$ for which there is a prime $\mathfrak{p}\mid p$ of $K$ such that $\mathfrak{p}$ is ramified in $L$. Following \cite[Sec. 4]{VKMu97}, we further put
\begin{equation}\label{def-M}
\log \mathcal{M} = \log \mathcal{M}(L/K)
= \frac{1}{n_K} \log d_K + 2\sum_{p\in P(L/K)} \log p  +2 \log (n_{L}/n_K).
\end{equation}

Let $C$ be a conjugacy class of $G=\Gal(L/K)$, and let $\pi_C(x)$ denote the number of primes $\mathfrak{p}$ of $K$, with $\norm(\mathfrak{p})\le x$, whose Artin symbol equals $C$. In \cite{VKMu97}, V.K. Murty proved the following effective version of the Chebotarev density theorem under Artin's conjecture.

\begin{theorem}[{\cite[Theorem 4.1]{VKMu97}}] \label{VKMurtythm}
If Artin's conjecture holds for $L/K$, then there is an absolute $c_2>0$ such that for any conjugacy class $C$ in $G$,
one has
\begin{align}\label{VKMurty-est}
 \begin{split}
\pi_C(x) &=\frac{|C|}{|G|} \Li(x) - \frac{|C|}{|G|}\chi_1(C) \Li(x^{\beta_1})\\
& +O\left(|C|^{\frac{1}{2}} n_K x (\log (\mathcal{M} x))^2
\exp\left(   \frac{-c_2 \log x}{b(G)^{\frac{3}{2}}  \sqrt{ b(G)^3 (\log\mathcal{A} )^2 + n_K\log x }}\right) \right)
  \end{split}
\end{align}
provided that $\log x\gg b(G)^4 n_K \log \mathcal{M}$. Here, $\beta_1$ is the possible exceptional zero of $\zeta_L(s)$, the Dedekind zeta function of $L$. If $\beta_1$ exists, then it must arise from $L(s,\chi_1,L/K)$, the Artin $L$-function attached to $\chi_1$, for some character $\chi_1$ that is real and abelian (i.e 1-dimensional).
\end{theorem}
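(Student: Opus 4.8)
The plan is to follow the classical analytic route to the Chebotarev density theorem --- reduce $\pi_C(x)$ to a weighted sum of Chebyshev functions attached to the irreducible Artin $L$-functions $L(s,\chi,L/K)$, apply a truncated explicit formula to each, and control the nontrivial zeros via a de la Vall\'ee-Poussin zero-free region --- but carried out so that every estimate is expressed through the analytic conductor $A_\chi=d_K^{\chi(1)}\norm(\mathfrak{f}_\chi)$ and the degree $\chi(1)n_K$ of $L(s,\chi)$ rather than through invariants of $L$ itself. This is exactly where the hypothesis that Artin's conjecture holds for $L/K$ is used: it guarantees that for $\chi\neq\mathbf{1}$ the function $L(s,\chi,L/K)$ is entire, hence may be treated as a genuine degree-$\chi(1)n_K$ $L$-function with a Hadamard product, a functional equation (available in any case from Brauer's theorem together with holomorphy), and a zero-counting estimate $N_\chi(t+1)-N_\chi(t)\ll\log A_\chi+\chi(1)n_K\log(t+2)$. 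Without holomorphy one would have only a quotient of Hecke $L$-functions, whose spurious poles would break the explicit formula and reintroduce the degree $n_L$ into the error term.

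First I would carry out the orthogonality reduction. Fixing $g_C\in C$ and using $\mathbf{1}[g\in C]=\frac{|C|}{|G|}\sum_{\chi\in\Irr(G)}\overline{\chi(g_C)}\chi(g)$ for $g\in G$, one gets $\psi_C(x)=\frac{|C|}{|G|}\sum_{\chi\in\Irr(G)}\overline{\chi(g_C)}\,\psi_\chi(x)$, where $\psi_\chi(x)=\sum_{\norm(\a)\le x}\Lambda_\chi(\a)$ is read off from $-\frac{L'}{L}(s,\chi,L/K)$. The trivial character yields the main term $\frac{|C|}{|G|}x$, which becomes $\frac{|C|}{|G|}\Li(x)$ after passing from $\psi_C$ to $\pi_C$ by partial summation; the prime-power terms contribute only $O(n_Kx^{1/2}\log x)$ and the ramified primes $O(n_K\log\mathcal{M})$, both negligible under the hypothesis $\log x\gg b(G)^4n_K\log\mathcal{M}$. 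The exceptional term $-\frac{|C|}{|G|}\chi_1(C)\Li(x^{\beta_1})$ will come out of the single possible real zero $\beta_1$ once $\chi_1$ is known to be real, so that $\overline{\chi_1(g_C)}=\chi_1(C)$.

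The analytic core has three ingredients. (i) For each $\chi\neq\mathbf{1}$, a truncated explicit formula with a free parameter $T\ge2$: $\psi_\chi(x)=-\sum_{|\Im\rho|\le T}\frac{x^\rho}{\rho}+O\!\left(\frac{x}{T}\big(\log(A_\chi x^{\chi(1)n_K})\big)^2+\log(A_\chi x^{\chi(1)n_K})\log x\right)$, the sum over nontrivial zeros of $L(s,\chi,L/K)$. (ii) A de la Vall\'ee-Poussin zero-free region $\Re\rho<1-c\big/\big(\log A_\chi+\chi(1)n_K\log(|\Im\rho|+2)\big)$ valid for every $\chi\neq\mathbf{1}$ with at most one exceptional real zero; I would derive this from the classical $3$-$4$-$1$ inequality applied to the Rankin--Selberg product $\zeta_K(s)L(s,\chi)L(s,\bar\chi)L(s,\chi\otimes\bar\chi)$, which has non-negative Dirichlet coefficients and, by Artin holomorphy for $\chi$, $\bar\chi$, $\chi\otimes\bar\chi$, no poles apart from a double pole at $s=1$. (iii) The standard theory of the exceptional zero --- the analysis of the possible real zero of $\zeta_L(s)=\prod_\chi L(s,\chi)^{\chi(1)}$ together with Deuring--Heilbronn repulsion --- which shows that there is a single such $\beta_1$, real and simple, necessarily coming from a one-dimensional real $\chi_1$. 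Inserting the zero-free region and the density bound of (i) into $\sum_{|\Im\rho|\le T}|x^\rho/\rho|$ and choosing $T$ so that $\log A_\chi+\chi(1)n_K\log T$ is balanced against $\sqrt{\chi(1)n_K\log x}$, then replacing $\chi(1)$ by $b(G)$, $A_\chi$ by $\mathcal{A}$, and using $\log\mathcal{A}\le b(G)n_K\log\mathcal{M}$ (a consequence of \eqref{cond-bd} and \eqref{def-M}), gives the uniform per-character bound $\psi_\chi(x)+\delta_{\chi=\chi_1}\frac{x^{\beta_1}}{\beta_1}\ll x\big(\log(\mathcal{M}x)\big)^2\exp\!\left(\frac{-c\log x}{b(G)^{3/2}\sqrt{b(G)^3(\log\mathcal{A})^2+n_K\log x}}\right)$, the truncation error being dominated precisely when $\log x\gg b(G)^4n_K\log\mathcal{M}$.

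Finally I would reassemble. Combining $|\overline{\chi(g_C)}|\le\chi(1)$ with column orthogonality $\sum_{\chi\in\Irr(G)}|\chi(g_C)|^2=|G|/|C|$ and Cauchy--Schwarz over the (at most $|G|$) irreducible characters converts the uniform per-character estimate into the advertised factor $|C|^{1/2}$; the factor $\big(\log(\mathcal{M}x)\big)^2$ survives from the zero-count, the factor $n_K$ enters through the Perron estimate over $K$, and dividing by $\log x$ in the partial summation back to $\pi_C$ leaves the leading term $\frac{|C|}{|G|}\Li(x)-\frac{|C|}{|G|}\chi_1(C)\Li(x^{\beta_1})$ plus the stated error. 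The hypothesis $\log x\gg b(G)^4n_K\log\mathcal{M}$ is exactly what makes the exponential a genuine power saving and keeps the truncation error subordinate. I expect the main obstacle to lie in ingredients (ii) and (iii): securing the de la Vall\'ee-Poussin region for the higher-dimensional $L(s,\chi)$ with the \emph{correct} dependence on $A_\chi$ and $\chi(1)n_K$ --- not on $d_L$ and $n_L$ --- and pinning down the exceptional zero as a single one-dimensional real $\chi_1$, is the delicate part, and it is precisely here that Artin holomorphy for the auxiliary twists is indispensable; by contrast the optimization of $T$ and the bookkeeping of the $b(G)$-powers, though fiddly, are routine.
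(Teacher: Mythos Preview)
The paper does not prove this theorem; it is quoted as \cite[Theorem 4.1]{VKMu97} and used as a black box in Section~2.1 and thereafter. There is therefore no proof in the present paper to compare your sketch against.

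For what it is worth, your outline matches the strategy of V.K.~Murty's original argument: decompose $\psi_C$ by character orthogonality, use Artin holomorphy so that each $L(s,\chi,L/K)$ may be treated as an entire $L$-function of degree $\chi(1)n_K$ and analytic conductor $A_\chi$ (rather than having to fall back on $\zeta_L$ with its invariants $d_L$, $n_L$ as in Lagarias--Odlyzko), obtain a de la Vall\'ee-Poussin region and zero-density estimate in those parameters, isolate the possible exceptional zero as coming from a single real abelian $\chi_1$, and recombine via Cauchy--Schwarz over $\Irr(G)$ to produce the factor $|C|^{1/2}$. One point to watch if you carry this out: your proposed mechanism for the zero-free region via the product $\zeta_K(s)L(s,\chi)L(s,\bar\chi)L(s,\chi\otimes\bar\chi)$ requires non-negativity of the Dirichlet coefficients and control of the conductor of $\chi\otimes\bar\chi$, neither of which is automatic for a general Artin representation; Murty's treatment in \cite{VKMu97} handles this step somewhat differently, and it is worth consulting the source for the precise positivity input.
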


When $K=\Bbb{Q}$, we know that $\chi_1$ corresponds to a real (quadratic) Dirichlet character modulo $Q_1$ (say). By \eqref{cond-bd}, we have
\begin{equation}\label{logQ-bd}
\log Q_1 \ll   \sum_{p\in P(L/\Bbb{Q})} \log p  + \log n_{L},
\end{equation}
which will allow us to apply the following theorem of Siegel later. 

\begin{theorem}[Siegel]\label{Siegel}
With the same notation as above, for any $\varepsilon>0$, there is an absolute (ineffective) constant $c_3(\varepsilon)>0$ such that
$$
\beta_1< 1-\frac{c_3(\varepsilon)}{Q_1^\varepsilon}.
$$
\end{theorem}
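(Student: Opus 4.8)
Siegel's theorem is classical; the plan is to recall the standard argument (due in this streamlined form to Estermann; see Davenport, \emph{Multiplicative Number Theory}, Ch.~21) and then translate the resulting lower bound for $L(1,\chi)$ into the asserted bound on the exceptional zero $\beta_1$. Throughout, ``constant'' means absolute unless a dependence on $\varepsilon$ is indicated, and the ineffectivity of $c_3(\varepsilon)$ will arise at exactly one identifiable point.

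\emph{Step 1: the auxiliary non-negative Dirichlet series.} Given two distinct real primitive characters $\chi_1\bmod q_1$ and $\chi_2\bmod q_2$, put $F(s)=\zeta(s)L(s,\chi_1)L(s,\chi_2)L(s,\chi_1\chi_2)$. A direct inspection of the Euler factors shows $F(s)=\sum_{n\ge 1}a_n n^{-s}$ with $a_1=1$ and all $a_n\ge 0$ (equivalently, $F$ differs from the Dedekind zeta function of the biquadratic field attached to $\chi_1,\chi_2$ only by finitely many harmless Euler factors). Moreover $F$ is holomorphic in $\Re s>0$ except for a simple pole at $s=1$ with residue $\lambda=L(1,\chi_1)L(1,\chi_2)L(1,\chi_1\chi_2)>0$. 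Using the convexity bound $|L(s,\chi_j)|\ll\log\bigl(q_j(2+|t|)\bigr)$ on $\Re s\ge 1/2$ together with a contour shift (or Landau's lemma on Dirichlet series with non-negative coefficients), one obtains an absolute constant $c>0$ such that, for $3/4\le\sigma<1$,
$$F(\sigma)\ \ge\ 1-c\,\lambda\,\frac{(q_1q_2)^{c(1-\sigma)}}{1-\sigma}.$$
If $\beta$ is a real zero of $L(s,\chi_1)$, then $F(\beta)=0$, so taking $\sigma=\beta$ forces $\lambda\gg(1-\beta)(q_1q_2)^{-c(1-\beta)}$; since the trivial upper bounds give $L(1,\chi_1),L(1,\chi_1\chi_2)\ll\log(q_1q_2)$, dividing yields
$$L(1,\chi_2)\ \gg\ \frac{1-\beta}{(q_1q_2)^{c(1-\beta)}(\log q_1q_2)^{2}}.$$

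\emph{Step 2: the dichotomy giving $L(1,\chi)\gg_\varepsilon q^{-\varepsilon}$ for all real primitive $\chi\bmod q$.} Fix $\varepsilon\in(0,1/2)$. Either (a) no $L(s,\chi)$ with $\chi$ real primitive has a real zero in $(1-\varepsilon,1)$, or (b) some real primitive $\chi_1\bmod q_1$ does. In case (a), apply the analogue of Step~1 to $\zeta(s)L(s,\chi)$ (whose residue at $s=1$ is $L(1,\chi)$): this product is negative on $(1-\varepsilon,1)$, since $\zeta<0<L(\cdot,\chi)$ there, so evaluating the lower bound at $\sigma=1-\tfrac1{c\log q}$ forces $L(1,\chi)\gg 1/\log q\gg_\varepsilon q^{-\varepsilon}$ for all sufficiently large $q$, with an \emph{effective} constant; the finitely many remaining $q$ are handled trivially. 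In case (b) we \emph{fix} one such triple $(\chi_1,q_1,\beta)$ --- this is the sole source of ineffectivity, as we have no control over the size of $q_1$ in terms of $\varepsilon$ --- and feed it, together with an arbitrary real primitive $\chi_2\bmod q_2\ne\chi_1$, into the last display of Step~1. As $q_1$ and $1-\beta<\varepsilon$ are now fixed, the factor $(q_1q_2)^{c(1-\beta)}$ is at most a constant (depending on $\chi_1$) times $q_2^{c\varepsilon}$, whence $L(1,\chi_2)\gg_{\chi_1}q_2^{-c\varepsilon}/(\log q_2)^2$; absorbing the logarithm and relabelling $\varepsilon$ gives $L(1,\chi)\gg_\varepsilon q^{-\varepsilon}$ for every real primitive $\chi\bmod q$ (the single exceptional character $\chi_1$ being absorbed into the constant).

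\emph{Step 3: from $L(1,\chi)$ to $\beta_1$.} Let $\chi$ be the real quadratic Dirichlet character modulo $Q_1$ whose $L$-function carries the exceptional zero $\beta_1$. By the mean value theorem, $L(1,\chi)=(1-\beta_1)\bigl(-L'(\xi,\chi)\bigr)$ for some $\xi\in(\beta_1,1)$, and the standard bound $|L'(\xi,\chi)|\ll(\log Q_1)^{2}$ for $\xi\ge 1/2$, combined with $L(1,\chi)\gg_\varepsilon Q_1^{-\varepsilon}$ from Step~2, gives $1-\beta_1\gg_\varepsilon Q_1^{-\varepsilon}/(\log Q_1)^2$. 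Replacing $\varepsilon$ by $\varepsilon/2$ absorbs the logarithmic factor and yields $\beta_1<1-c_3(\varepsilon)Q_1^{-\varepsilon}$, as claimed. The one genuine obstacle here is conceptual rather than technical: the argument is intrinsically ineffective at Step~2(b), and making $c_3(\varepsilon)$ effective would amount to ruling out Landau--Siegel zeros --- precisely the hypothesis isolated in Theorem~\ref{main-thm-NLS}.
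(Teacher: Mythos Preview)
Your proof is a correct sketch of the classical Estermann argument (as in Davenport, Ch.~21), but there is nothing in the paper to compare it against: the paper states Siegel's theorem without proof, citing it as a standard result, and only \emph{uses} the bound $\beta_1<1-c_3(\varepsilon)Q_1^{-\varepsilon}$ to control $\Li(x^{\beta_1})$ in the derivation of \eqref{CDT-3}. One cosmetic point: your Step~1 uses $\chi_1$ for the auxiliary real character, which collides with the paper's notation where $\chi_1$ already denotes the exceptional character of modulus $Q_1$ carrying the Landau--Siegel zero; renaming would avoid confusion. Otherwise the argument is sound, and you correctly identify the single ineffective step (the existence and size of the auxiliary character in case~(b)), which is exactly the hypothesis the paper isolates later in Theorem~\ref{main-thm-NLS}.
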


\subsection{Elliptic curves and associated prime-counting functions}\label{pre}

Let $E/\Bbb{Q}$ be an elliptic curve of conductor $N_E$, and let $E[m]$ denote the set of $m$-torsion points of $E$. It is well-known that  $\Bbb{Q}(E[m])/\Bbb{Q}$ forms a Galois extension. Also, the ramified primes of $\Bbb{Q}(E[m])/\Bbb{Q}$ are divisors of $mN_E$  (see \cite[Proposition 3.5]{CM04}). Consequently, as $\Bbb{Q}(\zeta_q)\subset \Bbb{Q}(E[q])$, we know that $\Bbb{Q}(E[m])\Bbb{Q}(\zeta_q)\subset \Bbb{Q}(E[mq]) $, and thus the ramified primes of $\Bbb{Q}(E[m])\Bbb{Q}(\zeta_q)$ must divide $mqN_E$.

We shall require the following handy lemmata  (see, e.g., \cite[Lemma 2.1]{CM04} and \cite[Lemma 2.2]{K14}).

\begin{lemma}\label{cyc-cri}
Assume that $p\nmid N_E$. Then $\bar{E}(\Bbb{F}_p)$ is cyclic if and only if $p$ does not split completely in $\Bbb{Q}(E[m])$ for any square-free $m>1$.
\end{lemma}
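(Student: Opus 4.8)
The plan is to reduce the statement to an elementary group-theoretic fact about the reduction representation. First I would recall the Weil pairing: for $p\nmid N_E$ and any $m$ with $p\nmid m$, the reduction $\bar E[m]$ is a free $\Z/m\Z$-module of rank $2$, and $\bar E(\F_p)$ is cyclic precisely when, for every prime $\ell$, the $\ell$-torsion subgroup $\bar E(\F_p)[\ell]$ is cyclic, i.e.\ has order at most $\ell$. Thus $\bar E(\F_p)$ fails to be cyclic if and only if there is some prime $\ell$ with $\bar E[\ell]\subseteq \bar E(\F_p)$, equivalently $(\Z/\ell\Z)^2\subseteq \bar E(\F_p)$. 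So the key equivalence to establish is: $\bar E[\ell]\subseteq \bar E(\F_p)$ if and only if $p$ splits completely in $\Q(E[\ell])$; and then one promotes this from a single prime $\ell$ to all square-free $m>1$.

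The main step is the Frobenius translation. For $p\nmid \ell N_E$, the prime $p$ is unramified in $\Q(E[\ell])$, and the Frobenius conjugacy class at $p$ acts on $E[\ell]\cong(\Z/\ell\Z)^2$ through the mod-$\ell$ representation $\rho_\ell$; the reduction map identifies $E[\ell]$ with $\bar E[\ell]$ equivariantly, so the subgroup of $\bar E[\ell]$ fixed by Frobenius is exactly $\bar E(\F_p)[\ell]$. Hence $\bar E[\ell]\subseteq \bar E(\F_p)$ iff Frobenius acts trivially on $E[\ell]$ iff $p$ splits completely in $\Q(E[\ell])$. One must also dispose of the case $p\mid \ell$ (only finitely many $\ell$, and for such $\ell$ the $\ell$-part of $\bar E(\F_p)$ has order at most $p$, contributing a cyclic factor, so it cannot obstruct cyclicity through a full $\bar E[\ell]$), so without loss of generality we only range over $\ell\nmid p N_E$, i.e.\ over square-free $m$ coprime to $pN_E$. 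Assembling: $\bar E(\F_p)$ is non-cyclic iff $\bar E[\ell]\subseteq\bar E(\F_p)$ for some prime $\ell$ iff $p$ splits completely in $\Q(E[\ell])$ for some prime $\ell$, which (since splitting completely in $\Q(E[\ell_1])$ and $\Q(E[\ell_2])$ is equivalent to splitting completely in the compositum $\Q(E[\ell_1\ell_2])$, and a prime splits completely in a compositum iff it does in each factor) is equivalent to $p$ splitting completely in $\Q(E[m])$ for some square-free $m>1$.

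I expect the only genuinely delicate point to be the careful bookkeeping of the $p\mid m$ case and the insistence that we may take $m$ square-free: the reduction to prime $\ell$ is where the structure theorem for finite abelian groups enters, and one has to be precise that a square-free $m$ suffices because it is exactly the primes dividing the order, not their multiplicities, that govern cyclicity. Everything else is the standard dictionary between "Frobenius acts trivially", "splits completely", and "the $\ell$-torsion is entirely $\F_p$-rational", together with the elementary fact that splitting completely is multiplicative over composita. No new estimates are needed; the lemma is purely algebraic, and I would simply cite \cite[Lemma 2.1]{CM04} (and \cite[Lemma 2.2]{K14} for the square-free refinement) for the details after sketching the argument above.
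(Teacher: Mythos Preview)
Your sketch is correct and is precisely the standard argument behind \cite[Lemma 2.1]{CM04}; the paper itself does not give a proof but simply cites that reference (together with \cite[Lemma 2.2]{K14}), exactly as you propose to do at the end. There is nothing to add.
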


\begin{lemma}\label{cri_d}
Let $E/\Bbb{Q}$ be an elliptic curve and $p$ be a prime of good reduction. Then $p$ splits completely in $\Bbb{Q}(E[m])$  if and only  if  $m\mid d_p$.
\end{lemma}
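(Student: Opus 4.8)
The plan is to interpret complete splitting of $p$ in $\Bbb{Q}(E[m])$ as triviality of the Frobenius action on the $m$-torsion, and then to read off the answer from the group structure $\bar{E}(\Bbb{F}_p)\simeq\Bbb{Z}/d_p\Bbb{Z}\oplus\Bbb{Z}/e_p\Bbb{Z}$. First I would dispose of the primes dividing $m$. On one hand, if $p$ splits completely in $\Bbb{Q}(E[m])$ then it splits completely in the subfield $\Bbb{Q}(\zeta_m)$, so $p\equiv1\pmod{m}$ and in particular $p\nmid m$. On the other hand, I claim $d_p\mid p-1$ unconditionally: since $d_p\mid e_p$, the full $d_p$-torsion subgroup of $\bar{E}(\overline{\Bbb{F}}_p)$ already lies in $\bar{E}(\Bbb{F}_p)$ and is therefore fixed by $\Gal(\overline{\Bbb{F}}_p/\Bbb{F}_p)$ (here $p\nmid d_p$, because $\bar{E}[p](\overline{\Bbb{F}}_p)$ is cyclic, which forces the smaller invariant $d_p$ to be prime to $p$); the Weil pairing $\bar{E}[d_p]\times\bar{E}[d_p]\to\mu_{d_p}$ is surjective and Galois-equivariant, so $\mu_{d_p}\subseteq\Bbb{F}_p$, i.e.\ $d_p\mid p-1$. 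Hence $m\mid d_p$ also forces $p\nmid m$, and we may assume $p\nmid mN_E$ from now on.

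Under this hypothesis I would invoke the standard fact that reduction modulo $p$ induces an isomorphism $E[m]\xrightarrow{\ \sim\ }\bar{E}[m]$ of $(\Bbb{Z}/m\Bbb{Z})$-modules, equivariant for the action of $\Gal(\overline{\Bbb{Q}}/\Bbb{Q})$ (through $\rho_m$) and that of $\Gal(\overline{\Bbb{F}}_p/\Bbb{F}_p)$, compatibly with the decomposition group at $p$. Since the ramified primes of $\Bbb{Q}(E[m])/\Bbb{Q}$ divide $mN_E$, the prime $p$ is unramified in $\Bbb{Q}(E[m])$, so $p$ splits completely there iff its Frobenius class is trivial, i.e.\ iff the $p$-power Frobenius $\phi_p$ fixes $\bar{E}[m]$ pointwise, i.e.\ iff $\bar{E}[m]\subseteq\bar{E}(\Bbb{F}_p)$.

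It then remains to check that $\bar{E}[m]\subseteq\bar{E}(\Bbb{F}_p)$ if and only if $m\mid d_p$. Writing $\bar{E}(\Bbb{F}_p)\simeq\Bbb{Z}/d_p\Bbb{Z}\oplus\Bbb{Z}/e_p\Bbb{Z}$, the $m$-torsion subgroup of $\bar{E}(\Bbb{F}_p)$ is $\Bbb{Z}/(m,d_p)\Bbb{Z}\oplus\Bbb{Z}/(m,e_p)\Bbb{Z}$, of order $(m,d_p)(m,e_p)$, whereas $\bar{E}[m]$ has order $m^2$; hence $\bar{E}[m]\subseteq\bar{E}(\Bbb{F}_p)$ iff $(m,d_p)=(m,e_p)=m$, iff $m\mid d_p$ and $m\mid e_p$, which, since $d_p\mid e_p$, is equivalent to $m\mid d_p$. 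This would complete the proof. I do not expect a genuine obstacle here: the only steps requiring a little care are the reduction to the case $p\nmid m$ (via $\Bbb{Q}(\zeta_m)\subseteq\Bbb{Q}(E[m])$ and $d_p\mid p-1$) and the fact that reduction modulo a good prime coprime to $m$ is an isomorphism of Galois modules intertwining decomposition groups, both of which are standard.
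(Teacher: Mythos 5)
Your proof is correct, and it is the standard argument. The paper does not prove this lemma — it is cited from the references (\cite[Lemma~2.2]{K14}, following \cite{CM04}) — and your argument matches those sources: for $p\nmid mN_E$ the reduction map $E[m]\xrightarrow{\sim}\bar{E}[m]$ is an isomorphism of Galois modules compatible with decomposition groups, $p$ is unramified in $\Bbb{Q}(E[m])$, so complete splitting is equivalent to the Frobenius $\phi_p$ acting trivially on $\bar{E}[m]$, i.e.\ to $\bar{E}[m]\subseteq\bar{E}(\Bbb{F}_p)$, which by the invariant-factor decomposition (and $p\nmid m$) is equivalent to $m\mid d_p$. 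Your preliminary reduction to $p\nmid m$ — noting $\Bbb{Q}(\zeta_m)\subseteq\Bbb{Q}(E[m])$ on one side, and proving $d_p\mid p-1$ via the Weil pairing together with the cyclicity of $\bar{E}[p](\overline{\Bbb{F}}_p)$ (which forces $p\nmid d_p$) on the other — is carried out more explicitly than in the references, which typically take the hypothesis $p\nmid m$ for granted, and that extra care is both correct and welcome.
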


We set $\pi_{E,1}(x,q,a)=\#\{p\le x\mid p\nmid N_E,\, p\equiv a\,\mymod{q}\}$, and for integers $m>1$, we define
$$
\pi_{E,m}(x,q,a)=\#\{p\le x\mid \text{$p\nmid N_E$ splits completely in $\Bbb{Q}(E[m])$, $p\equiv a\,\mymod{q}$}\}.
$$

In the remaining part of this section, we shall further assume that $E$ has CM by the ring of integers $\mathcal{O}_K$ of an imaginary quadratic field $K$. Recall that for any $3\le m\le \sqrt{x} +1$, one has
\begin{equation}\label{tri_bd_Em}
\pi_{E,m}(x,q,a) 
\le \#\{p\le x\mid \text{$p\nmid N_E$ splits completely in $\Bbb{Q}(E[m])$}\}
\ll \frac{x}{m^2},
\end{equation}
where the implied constant is absolute (see, e.g., \cite[Lemma 2.3]{K14} or \cite[Lemma 5]{MR83}). In addition, the argument used in  \cite[Sec. 4.1]{AG} yields 
\begin{equation}\label{sum-y-piEm}
\sum_{y< m \le \sqrt{x} +1} \pi_{E,m}(x,q,a)
\ll  \left( \frac{\sqrt{x}}{q} + \frac{\sqrt{x}\log x}{q^2} + \frac{x}{yq^3} \right) G_D(a,q),
\end{equation}
for $2q\le y\le\sqrt{x}$, where $G_D(a,q)$ is the same as in Theorem \ref{AG-main-thm}.



\begin{remark}
It may be noticed that in \eqref{tri_bd_Em}, the dependence of $q$ is dropped. It is undoubtedly desirable to obtain a uniform upper bound for $\pi_{E,m}(x,q,a) $ with explicit dependence of $q$ (whenever $E$ is with CM or not). In general, it manifests as a Brun-Titchmarsh type inequality for the Chebotarev density theorem. The known results usually require $x$ significantly greater than a power of the absolute discriminant of $\Bbb{Q}(E[m])$, (see, e.g., \cite[Theorem 1.4]{LMO}), which is often too large for our purpose. On the other hand, for CM curves, the proof of  \eqref{tri_bd_Em} translates the original estimate to a lattice-counting  problem over quadratic fields (see \cite[Lemma 5]{MR83}). However, tracing the involving mod $q$ condition appears to be unclear during the translating procedure. To a certain degree, this reflects the fact that the degrees of the composite fields $\Bbb{Q}(E[m])\Bbb{Q}(\zeta_q)$ do not behave uniformly but depend on the common prime factors of $m$, $q$, and $M_E$.
\end{remark}

It can be shown that if $E$ is with CM by $\mathcal{O}_K$, then $\Bbb{Q}(E[m])/\Bbb{Q}$ is a meta-abelian extension, and thus Artin's conjecture holds for $\Bbb{Q}(E[m])/\Bbb{Q}$.\footnote{By a meta-abelian extension $L/K$, we mean that $L/K$ is a Galois extension with Galois group $G$, and $G$ admits an abelian normal subgroup $N$ such that $G/N$ is also abelian. For such an instance, all the irreducible representations of $G$ are monomial (namely, induced from 1-dimensional representations of subgroups of $G$), and thus Artin's conjecture is known. In fact, it can be further shown that Langlands reciprocity holds for such $L/K$ (see, e.g., \cite[Theorem 2.5]{Wong17}).} In what follows, we shall extend this result to $\Bbb{Q}(E[m])\Bbb{Q}(\zeta_q)/\Bbb{Q}$.

\begin{lemma}\label{AC}
In the notation as above, if $E$ is with CM by $\mathcal{O}_K$, then Artin's conjecture holds  for $\Bbb{Q}(E[m])\Bbb{Q}(\zeta_q)/\Bbb{Q}$. In addition, the biggest character degree $b( \Gal(\Bbb{Q}(E[m])\Bbb{Q}(\zeta_q)/\Bbb{Q} ))$ of the Galois group of $\Bbb{Q}(E[m])\Bbb{Q}(\zeta_q)/\Bbb{Q}$ is at most 2.
\end{lemma}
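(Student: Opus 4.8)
The plan is to reduce the claim to a structural statement about the Galois group $G = \Gal(\Bbb{Q}(E[m])\Bbb{Q}(\zeta_q)/\Bbb{Q})$, namely that $G$ is metabelian, since then all irreducible representations are monomial (induced from $1$-dimensional characters of subgroups), which immediately gives Artin's holomorphy conjecture, and one can track the induction data to bound the character degrees by $2$. First I would recall that when $E$ has CM by $\mathcal{O}_K$, the mod-$m$ representation $\rho_{E,m}\colon \Gal(\Bbb{Q}(E[m])/\Bbb{Q}) \hookrightarrow \GL_2(\Bbb{Z}/m\Bbb{Z})$ has image lying in the normalizer of a Cartan subgroup: the subgroup $H_m$ fixing $K$ (equivalently, the image of $\Gal(\Bbb{Q}(E[m])/K)$) is abelian (it lands in a Cartan, essentially $(\mathcal{O}_K/m\mathcal{O}_K)^\times$ acting by complex multiplication), and it has index at most $2$ in the full image, the quotient being generated by complex conjugation. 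Hence $\Gal(\Bbb{Q}(E[m])/\Bbb{Q})$ has an abelian normal subgroup with abelian (indeed cyclic of order $\le 2$) quotient — it is metabelian.

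Next I would incorporate the cyclotomic factor. Write $L = \Bbb{Q}(E[m])\Bbb{Q}(\zeta_q)$ and $F = \Bbb{Q}(E[m])$. Since $\Gal(L/\Bbb{Q})$ embeds into $\Gal(F/\Bbb{Q}) \times \Gal(\Bbb{Q}(\zeta_q)/\Bbb{Q})$ via restriction (both being subfields, and $L$ their compositum), it suffices to show that a subgroup of a product $A \times B$ with $A$ metabelian and $B$ abelian is again metabelian with derived length giving the desired degree bound. Concretely, let $N \trianglelefteq \Gal(F/\Bbb{Q})$ be the abelian normal subgroup with $\Gal(F/\Bbb{Q})/N$ abelian; then $(N \times \Gal(\Bbb{Q}(\zeta_q)/\Bbb{Q})) \cap \Gal(L/\Bbb{Q})$ is an abelian normal subgroup of $\Gal(L/\Bbb{Q})$ with abelian quotient. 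Thus $G = \Gal(L/\Bbb{Q})$ is metabelian, so every $\chi \in \Irr(G)$ is monomial, and $L(s,\chi,L/\Bbb{Q})$ is an Artin $L$-function of a $1$-dimensional character of a subgroup, hence entire (away from $s=1$ for the trivial character) by class field theory — this is Artin's conjecture for $L/\Bbb{Q}$. For the degree bound: identifying the abelian normal subgroup above with a subgroup $N' \trianglelefteq G$ of index $\le 2$ (since $\Gal(F/K)$ has index $2$ in $\Gal(F/\Bbb{Q})$ and adjoining $\Bbb{Q}(\zeta_q)$, which sits inside $F$-related abelian data and inside $K\Bbb{Q}(\zeta_q)$, does not increase this index beyond $2$), every irreducible character of $G$ restricted to $N'$ decomposes into $1$-dimensional pieces, so by Clifford theory $\chi(1)$ divides $[G:N'] \le 2$, giving $b(G) \le 2$.

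I would present this cleanly by invoking the standard fact (e.g.\ as cited in the footnote via \cite[Theorem 2.5]{Wong17} or the classical reference for CM curves) that $\Bbb{Q}(E[m])/\Bbb{Q}$ is metabelian, so that the only new content is handling the composite with $\Bbb{Q}(\zeta_q)$ and extracting the sharp degree bound $2$. \textbf{The main obstacle} is the degree bound rather than holomorphy: one must verify that adjoining $\Bbb{Q}(\zeta_q)$ does not destroy the index-$2$ abelian normal subgroup. The key point is that $\Bbb{Q}(\zeta_q) \subseteq \Bbb{Q}(E[q]) \subseteq \Bbb{Q}(E[\mathrm{lcm}(m,q)])$, and over $K$ the field $K(E[m])K(\zeta_q)$ is abelian over $K$ (being contained in the ray-class-type abelian extension of $K$ cut out by CM), so $\Gal(L/K\cap L)$ — more precisely $\Gal(L/(K\cap L))$, and $[L : (K\cap L)\text{-fixed part}]$ — remains abelian of index dividing $2$ in $\Gal(L/\Bbb{Q})$; one then concludes as above. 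Care is needed when $K \subseteq \Bbb{Q}(\zeta_q)$ or when $m$ is small (e.g.\ $m \le 2$), but in those edge cases $G$ is directly abelian or dihedral-like of the asserted type, so $b(G) \le 2$ holds a fortiori.
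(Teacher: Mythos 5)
Your plan is correct and rests on the same structural fact as the paper, namely that $\Gal(\Bbb{Q}(E[m])/K)$ is abelian (via M.R. Murty's identification $K_{\mathfrak{m}} \subseteq K(E[m]) = \Bbb{Q}(E[m])$ for $m \ge 3$), so that the Galois group of the compositum sits inside a metabelian group. The differences from the paper's proof are genuine but minor, and worth spelling out.

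For the degree bound, you pass the abelian normal subgroup of index $\le 2$ through the intersection $(N \times \Gal(\Bbb{Q}(\zeta_q)/\Bbb{Q})) \cap G$ and then apply It\^o's theorem (or Clifford theory) to get $\chi(1) \mid [G:N'] \le 2$. The paper instead bounds character degrees of the two factors $\Gal(\Bbb{Q}(E[m])/\Bbb{Q})$ and $\Gal(\Bbb{Q}(\zeta_q)/\Bbb{Q})$ separately (the first by It\^o-type reasoning for $m \ge 3$ and by inspection of subgroups of $S_3$ for $m = 2$), takes the product, and then uses the general fact that $b(H) \le b(G)$ for $H \le G$ applied to the restriction embedding $\Gal(L/\Bbb{Q}) \hookrightarrow \Gal(\Bbb{Q}(E[m])/\Bbb{Q}) \times \Gal(\Bbb{Q}(\zeta_q)/\Bbb{Q})$. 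Both are equally rigorous; your version works directly on the compositum, the paper's on the ambient product.

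For Artin's conjecture, you invoke the clean textbook route: metabelian $\Rightarrow$ all irreducibles monomial $\Rightarrow$ the Artin $L$-functions are products of Hecke $L$-functions, hence holomorphic. The paper instead first records that $\cd(G) \subseteq \{1\}$ or $\{1,2\}$ and then cites \cite[Corollary 5.2]{Wong17} (together with Artin reciprocity for the degree-one characters), a result on groups with small character-degree sets that in fact gives automorphy, a slightly stronger conclusion. The monomiality observation you use is in any case acknowledged in the paper's footnote on meta-abelian extensions. One small point you gloss over is what happens for $m = 2$: there $K$ need not lie in $\Bbb{Q}(E[2])$, and $\Gal(\Bbb{Q}(E[2])/\Bbb{Q})$ is a subgroup of $\GL_2(\Bbb{F}_2) \cong S_3$; the paper treats this case separately by inspection, and you would want to do so too rather than folding it into the normalizer-of-Cartan picture, although $S_3$ is of course metabelian and has $b \le 2$, so the conclusion is unchanged.
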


\begin{proof}
In \cite[Lemma 4]{MR83}, M.R. Murty  proved that there exists an ideal $\mathfrak{f}=\mathfrak{f}_E$ of $K$ such that
$$
K_{\m} \subseteq K(E[m]) \subseteq  K_{\mathfrak{fm}},
$$
where  $K_{\m}$ and  $K_{\mathfrak{fm}}$ are ray class fields of $K$ of levels $\m=m\mathcal{O}_K$ and $\mathfrak{fm}$, respectively. In addition, he showed that if $m \ge 3$, then $\Bbb{Q}(E[m]) = K(E[m])$. Consequently,  for $m \ge 3$,  $\Gal(\Bbb{Q}(E[m])/K )$ is abelian, and so there is an abelian normal subgroup $N$ of $G=\Gal(\Bbb{Q}(E[m])/\Bbb{Q} )$ such that the fixed field of $N$ in $\Bbb{Q}(E[m])/\Bbb{Q}$ is $K$. As $[K:\Bbb{Q}]=2$, $G /N $ is of order two and thus a nilpotent group. Therefore, by [12, Proposition 2.7], when $m\ge 3$, for any $\chi\in\Irr(\Gal(\Bbb{Q}(E[m])/\Bbb{Q} ))$, we have $\chi(1)\le 2$. On the other hand, since $\Gal(\Bbb{Q}(E[2])/\Bbb{Q})$ is a subgroup of $\GL_2(\Bbb{Z}/2\Bbb{Z})$, $|\Gal(\Bbb{Q}(E[2])/\Bbb{Q})|$ is either 1, 2, 3, or 6, which means that $\Gal(\Bbb{Q}(E[2])/\Bbb{Q})$ is an abelian group or $S_3$. Thus, $b(\Gal(\Bbb{Q}(E[2])/\Bbb{Q}) )\le 2$.

Now, since  $\Gal(\Bbb{Q}(\zeta_q)/\Bbb{Q})$ is abelian, it then follows that for $m\ge 2$, every irreducible character of $\Gal(\Bbb{Q}(E[m])/\Bbb{Q})\times  \Gal(\Bbb{Q}(\zeta_q)/\Bbb{Q})$ is of degree at most 2. Furthermore, recalling that the map $\sigma  \mapsto   (\sigma\mid_{\Bbb{Q}(E[m])}, \sigma\mid_{\Bbb{Q}(\zeta_q)})$ defines an injective homomorphism from $\Gal(\Bbb{Q}(E[m])\Bbb{Q}(\zeta_q)/\Bbb{Q})$ to $\Gal(\Bbb{Q}(E[m])/\Bbb{Q})\times  \Gal(\Bbb{Q}(\zeta_q)/\Bbb{Q})$, from the above discussion, we conclude that
$$
b(\Gal(\Bbb{Q}(E[m])\Bbb{Q}(\zeta_q)/\Bbb{Q}))
\le b( \Gal(\Bbb{Q}(E[m])/\Bbb{Q})\times  \Gal(\Bbb{Q}(\zeta_q)/\Bbb{Q}) )
\le 2.
$$
In other words, the set
$$
\cd(\Gal(\Bbb{Q}(E[m])\Bbb{Q}(\zeta_q)/\Bbb{Q})) :=\{\chi(1)\mid \chi\in \Irr(\Gal(\Bbb{Q}(E[m])\Bbb{Q}(\zeta_q)/\Bbb{Q}))\}
$$
is either $\{1 \}$ or $\{1,2 \}$, where the former instance implies that $\Gal(\Bbb{Q}(E[m])\Bbb{Q}(\zeta_q)/\Bbb{Q})$ is abelian. Hence, applying Artin reciprocity and \cite[Corollary 5.2]{Wong17}, Artin's conjecture holds for $\Bbb{Q}(E[m])\Bbb{Q}(\zeta_q)/\Bbb{Q}$.
\end{proof}

Now, we shall consider $L/K =\Bbb{Q}(E[m])\Bbb{Q}(\zeta_q)/\Bbb{Q}$. By \eqref{def-M} and the fact that $[\Bbb{Q}(E[m]):\Bbb{Q}]\ll m^2$, we deduce
$$
\log \mathcal{M}(\Bbb{Q}(E[m])\Bbb{Q}(\zeta_q)/\Bbb{Q}) \ll 
 \sum_{p\mid m qN_E} \log p  + \log (m^2q)
 \ll \log (m qN_E).
$$
Hence, when $(\log (mqN_E))^2 \ll \log x$, as $(\log \mathcal{A})^2 \ll \log x$ in this case, \eqref{VKMurty-est} implies that
\begin{align}\label{CDT}
 \begin{split}
\pi_C(x) &=\frac{|C|}{|G|} \Li(x) - \frac{|C|}{|G|}\chi_1(C) \Li(x^{\beta_1})+O\left(|C|^{\frac{1}{2}}  x
\exp (  -c_2' \sqrt{\log x} ) \right),
  \end{split}
\end{align}
for some $c_2'\in (0,c_2)$ with $c_2$ given in Theorem \ref{VKMurtythm}, where $\beta_1$ is the possible exceptional zero of the Artin $L$-function $L(s,\chi_1, \Bbb{Q}(E[m])\Bbb{Q}(\zeta_q)/\Bbb{Q})$. As $\chi_1$ is abelian, Artin reciprocity tells us that $\chi_1$ can be regarded as a Dirichlet character,\footnote{By a slight abuse of notation, we shall denote such a Dirichlet character by $\chi_1$.} and $L(s,\chi_1, \Bbb{Q}(E[m])\Bbb{Q}(\zeta_q)/\Bbb{Q})$ corresponds to the Dirichlet $L$-function attached to $\chi_1$. Moreover, by \eqref{logQ-bd}, the modulus $Q_1$ of $\chi_1$ satisfies 
$$
\log Q_1 \ll   \sum_{p\mid m qN_E} \log p  + \log (m^2q) \ll\log (m qN_E)
$$
which means that
\begin{equation}\label{E-logQ-bd}
\log Q_1 \le\kappa \log (m qN_E)
\end{equation}
for some absolute $\kappa>0$. Thus, by Theorem \ref{Siegel}, for any $\varepsilon>0$, we have
$$
\Li(x^{\beta_1})\ll \frac{x^{\beta_1}}{ \log x}
\ll \frac{x}{ \log x}  \exp \left( \log x \frac{-c_3(\varepsilon)}{Q_1^\varepsilon}   \right)
\le \frac{x}{ \log x}  \exp \left(\log x  \frac{ -c_3(\varepsilon)}{ e^{\varepsilon\kappa \log (mqN_E)}}   \right).
$$

Assume that $m \le (\log x)^A$ and $qN_E\le (\log x)^B$. Note that under this assumption, we have
$$
\log (mqN_E) \le (A+B)\log\log x,
$$
which particularly gives
$$
(\log mqN_E)^2 \ll_{A,B} \log x.
$$
Hence, choosing $\varepsilon=1/(2\kappa (A+ B))$, we arrive at
$$
\pi_C(x) =\frac{|C|}{|G|} \Li(x) +O\left( \frac{x}{ \log x}  \exp ( -c_3 \sqrt{\log x}) 
+|C|^{\frac{1}{2}}  x  \exp (  -c_2' \sqrt{\log x} ) \right)
$$
with $c_3 =c_3(1/(2\kappa (A+ B)))$. Thus, for any $A,B>0$, we have
\begin{equation}\label{CDT-3}
\pi_{E,m}(x,q,a) =\frac{ \gamma_{E,m}(q,a)}{[\Bbb{Q}(E[m])\Bbb{Q}(\zeta_q):\Bbb{Q}]} \Li(x)
+O\left(   x\exp (- c_{5, A, B}  \sqrt{\log x} ) \right),
\end{equation}
for some positive constant $c_{5, A, B}$  depending only on $A, B$, provided $m \le (\log x)^A$ and $qN_E\le (\log x)^B$. Here, the factor $\gamma_{E,m}(q,a)$ is defined as in Theorem \ref{AG-main-thm} (for the rationale of the appearance of $\gamma_{E,m}(q,a)$, see \cite[Sec. 3.2.1]{AG}).

\begin{remark} 
(i) The effective version of the Chebotarev density theorem due to Lagarias and  Odlyzko \cite{LO77} gives
\begin{equation}\label{LO-CDT}
\pi_C(x) =\frac{|C|}{|G|} \Li(x) + O\left(|C|   x  \exp (  -c_5' \sqrt{ (\log x)/n_L }  ) \right)
\end{equation}
with some absolute $c_5'>0$. (See also \cite[Lemma 2]{RM84}.\footnote{It is worth noting that as Artin's conjecture is known for abelian extensions, one may improve \cite[Theorem 1]{RM84} on an analogue of Artin's primitive root conjecture for abelian extensions by utilising \eqref{VKMurty-est} instead of \cite[Lemma 2]{RM84}.}) So, without Artin's conjecture, to have an estimate of similar strength as \eqref{CDT} (and its consequence \eqref{CDT-3}) for $L=\Bbb{Q}(E[m])\Bbb{Q}(\zeta_q)$, one would have to work over a much more restricted range of $m$ (which is roughly at most $\ll \sqrt{ \log x}$ since $\phi(m) \ll [\Bbb{Q}(E[m]):\Bbb{Q}] \ll m^2$). However, to prove \eqref{AMe-generalisation} and \eqref{SW-pie} for any given $A>0$, it is crucial to the uniformity of the estimate \eqref{CDT-3} for $m \le (\log x)^A$.

\noindent (ii) The Siegel-Walfisz type estimate \eqref{CDT-3} can be improved significantly under GRH. Indeed, assuming GRH, by the work of  Lagarias-Odlyzko  \cite{LO77}, for any elliptic curve $E/\Bbb{Q}$ (not necessarily with CM), one has 
$$
\pi_{E,m}(x,q,a) =\frac{ \gamma_{E,m}(q,a)}{[\Bbb{Q}(E[m])\Bbb{Q}(\zeta_q):\Bbb{Q}]} \Li(x)
 +O\left(  x^{\frac{1}{2}}  \log  ([\Bbb{Q}(E[m])\Bbb{Q}(\zeta_q):\Bbb{Q}]  mqN_E)\right),
$$ 
(see also \cite[Theorem 3.1 and Lemma 3.4]{CM04}). If $E$ has CM,  $[\Bbb{Q}(E[m])\Bbb{Q}(\zeta_q):\Bbb{Q}]\ll m^2 q$; otherwise, 
$[\Bbb{Q}(E[m])\Bbb{Q}(\zeta_q):\Bbb{Q}]\ll m^4 q$. Therefore, the above estimate becomes
\begin{equation}\label{GRH-CDT}
\pi_{E,m}(x,q,a) =\frac{ \gamma_{E,m}(q,a)}{[\Bbb{Q}(E[m])\Bbb{Q}(\zeta_q):\Bbb{Q}]} \Li(x)
 +O\left(  x^{\frac{1}{2}}  \log  ( mqN_E)\right),
\end{equation}
where the implied constant is absolute. As shall be seen in the proofs of Theorems \ref{main-thm-c-GRH} and \ref{main-thm-exp-cond}, \eqref{GRH-CDT} plays a crucial role in helping us to obtain the power-savings for the estimates involved.
 \end{remark} 

%
%
%
%
%

\section{Proof of Theorem \ref{main-thm-c}} \label{proof}

Let $E$ be an elliptic curve over $\Bbb{Q}$. Recall that the order of $\bar{E}(\Bbb{F}_p)$ can be written as $|\bar{E}(\Bbb{F}_p)|=p+1-a_p$ for some $a_p\in \Bbb{Z}$ satisfying Hasse's bound $|a_p|\le 2\sqrt{p}$. Also, by Lemma \ref{cri_d}, if $p\nmid N_E$ splits completely in $\Bbb{Q}(E[m])$, then $m^2$ divides $ |\bar{E}(\Bbb{F}_p)|$. Consequently, for such an instance, we must have $ m^2 \le p+1 +  2\sqrt{p} = (\sqrt{p} +1)^2    \le (\sqrt{x} +1)^2$ if $p\le x$.

In the remaining part of this section, we shall further assume that $E$ is with complex multiplication by the ring of integers $\mathcal{O}_K$ of an imaginary quadratic field $K$. Now, an application of the inclusion-exclusion principle,  Lemma \ref{cyc-cri}, and the above discussion give
\begin{align}\label{cut-pic}
 \begin{split}
\pi_c(x,E,q, a)&= \#\{p \le x \mid \text{$p\nmid N_E$, $p\equiv a\,\mymod{q}$, and $\bar{E}(\Bbb{F}_p)$ is cyclic}\}\\
&=\sum_{m\le y}\mu(m)\pi_{E,m}(x,q,a)  + O\left(   \frac{x}{y}\right),
  \end{split}
\end{align}
for any $3\le y\le \sqrt{x} +1$, where  $\pi_{E,m}(x,q,a)$ is defined as in Section \ref{pre}, and the big-O term follows from \eqref{tri_bd_Em} and the elementary bound
$$
\sum_{ m>y }  \frac{x}{m^2}\ll \frac{x}{y}.
$$

From \eqref{CDT-3}, it follows that the sum in \eqref{cut-pic} equals
\begin{align}\label{CDT-4}
 \begin{split}
\sum_{m\le y} \frac{ \gamma_{E,m}(q,a) \mu(m)}{[\Bbb{Q}(E[m])\Bbb{Q}(\zeta_q):\Bbb{Q}]} \Li(x)
 +O\left(   y x\exp (- c_{5, A, B}  \sqrt{\log x} ) \right),
  \end{split}
\end{align}
provided that $3\le y\le (\log x)^A$ and $qN_E \le (\log x)^B$, where $ \gamma_{E,m}(q,a)$ is defined as before. By the fact that $[\Bbb{Q}(E[m]):\Bbb{Q}]\gg \phi(m)^2$, it has been shown in the first displayed estimate of \cite[Sec. 4.1]{AG} that
$$
\sum_{m> y} \frac{\mu(m)^2}{[\Bbb{Q}(E[m])\Bbb{Q}(\zeta_q):\Bbb{Q}]} \ll \frac{1}{y},
$$
and thus
\begin{equation}\label{tail-cyclicity}
\sum_{m> y}\frac{ \gamma_{E,m}(q,a) \mu(m)}{[\Bbb{Q}(E[m])\Bbb{Q}(\zeta_q):\Bbb{Q}]} \Li(x)
\ll  \frac{x}{y\log x}.
\end{equation}
Finally, choosing $y=(\log x)^A$ and combining \eqref{cut-pic}, \eqref{CDT-4}, and \eqref{tail-cyclicity}, we establish
\begin{align*}
\pi_c(x,E,q, a)
&=\sum_{m\le (\log x)^A}\frac{ \gamma_{E,m}(q,a)\mu(m)}{[\Bbb{Q}(E[m])\Bbb{Q}(\zeta_q):\Bbb{Q}]} \Li(x)\\
&+ O\left(   x (\log x)^A\exp (- c_{5,A, B}  \sqrt{\log x} ) +   \frac{x}{(\log x)^A}\right)\\
&=\sum_{m=1}^{\infty}\frac{ \gamma_{E,m}(q,a)\mu(m)}{[\Bbb{Q}(E[m])\Bbb{Q}(\zeta_q):\Bbb{Q}]} \Li(x) + O_{A,B}\left(    \frac{x}{(\log x)^A}\right),
\end{align*}
whenever $qN_E \le (\log x)^B$, as desired.

\section{Proof of Theorem \ref{main-thm-c-GRH}} \label{proof-pic-GRH}

In this section, we will prove  Theorem \ref{main-thm-c-GRH}. As shall be seen, the key new input is the Brun-Titchmarsh theorem and its variant for number fields.

\subsection{A CM refinement}

We begin by recalling the work of Hinz and Lodemann on the Brun-Titchmarsh inequality for number fields. Given a number filed $F$, for any coprime integral ideals $\a,\q$ of $F$, we set
$$
\pi(x,\q,\a)=\#\{\p\subset \mathcal{O}_F  \mid \text{$\norm(\p)\le x$ and $\p\sim\a\,\mymod{\q}$}\}, 
$$
where $\p$ stands for a prime of $F$, and  $\p\sim\a\enspace\mymod{\q}$ means that $\p$ and $\a$ are in the same ray class of the ray class group modulo $\q$. By \cite[Theorem 4]{HL93} of Hinz and Lodemann, it is known that
for any $(\a,\q)=1$, if $\norm(\q) < x$, then
$$
\pi(x,\q,\a)\le \frac{2x}{h(\q)   \log ( x/ \norm(\q) )}\cdot \left( 1 +O\left( \frac{ \log \log ( 3x/ \norm(\q)) }{ \log ( x/ \norm(\q) )}  \right) \right) ,
$$
where $h(\q)$ denotes the cardinality of the ray class group modulo $\q$. 

Let $h_F$ denote the class number of $F$ and $r_1$ be the number of real embeddings of $F$. Recall that $h(\q)$ can be expressed as
$$
h(\q)=\frac{h_F2^{r_1}\Phi(\q)}{T(\q)},
$$
where $T(\q)$ is the number of residue classes $\mymod{\q}$ that contain a unit, and $\Phi(\q)$ is the number field analogue of Euler's totient function for $F$. Moreover, if $F$ is an imaginary quadratic field, then $T(\q)\le 6$. Therefore, if $F$ is an imaginary quadratic field of class number 1, one has
$$
\frac{1}{h(\q)} 
 \le \frac{6}{\Phi(\q)}.
$$

For an elliptic curve $E$ with CM by the full ring of integers $\mathcal{O}_K$ of an imaginary quadratic field $K$, $K$ must be of class number 1. As discussed in the proof of Lemma \ref{AC}, by the work of M.R. Murty \cite{MR83}, we know $K_{\m} \subseteq K(E[m])=  \Bbb{Q}(E[m])$  for $m\ge 3$, where $K_{\m}$ is the ray class field of $K$ of level $\m=m\mathcal{O}_K$. Hence, if a rational prime $p$ splits completely in $\Bbb{Q}(E[m])$, then for any $\p$ of $K$ that is above $p$, $\p$ must splits completely in  $K(E[m])$ and thus in $K_{\m}$. Consequently, for $m\ge 3$, we have 
$$
\pi_{E, m}(x) \le \pi(x,\m,{\bf 1})+\log N_E,
$$
 where ${\bf 1} = 1 \cdot \mathcal{O}_K = \mathcal{O}_K$, and $N_E$ is the conductor of $E$. Therefore, from the above discussion, it follows that for any fixed $\theta\in(0,\frac{1}{2})$, if $3\le y\le x^{\theta}$, then
\begin{align}\label{CM-middle-refined}
 \begin{split}
\sum_{y< m \le x^{\theta}} \pi_{E,m}(x,q,a)
&\ll  \sum_{y< m \le x^{\theta}} \frac{x}{\Phi(\m)   \log ( x/ \norm(\m) )}   + x^{\theta} \log N_E\\
&\ll_{\theta}  \sum_{y< m \le x^{\theta}} \frac{x}{\phi(m)^2   \log  x}  + x^{\theta} \log N_E .
 \end{split}
\end{align}
This bound, together with the estimate 
\begin{equation}\label{AG-lemma11}
\sum_{ m >X} \frac{1}{\phi(m)^2} \ll \frac{1}{X} 
\end{equation}
(see \cite[Lemma 11]{AG}) and  \eqref{tri_bd_Em}, then gives
\begin{equation}\label{middle-sum-piEm}
\sum_{y< m \le  \sqrt{x} +1 } \pi_{E,m}(x,q,a) \ll   \frac{x}{y \log  x}  + x^{\theta} \log N_E +  \frac{x}{x^{\theta}}.
\end{equation}
Thus, by an argument similar to the one leading to \eqref{cut-pic} and the above bound, we have
$$
\pi_c(x,E,q, a)
=\sum_{m\le y}\mu(m)\pi_{E,m}(x,q,a)  + O\left(  \frac{x}{y \log  x}  + x^{\theta} \log N_E +  \frac{x}{x^{\theta}}\right).
$$
Hence, using \eqref{GRH-CDT} and \eqref{tail-cyclicity}, we obtain 
\begin{align}\label{split-pic-CM-GRH}
 \begin{split}
\pi_{c}(x,E)
&= \sum_{m=1}^{\infty} \frac{ \gamma_{E,m}(q,a)}{[\Bbb{Q}(E[m])\Bbb{Q}(\zeta_q):\Bbb{Q}]}\Li(x)+ O\left( yx^{1/2}\log  ( qN_Ex)\right)\\
&+ O\left(   \frac{x}{y \log  x}  + x^{\theta} \log N_E +  x^{1-\theta}\right).
   \end{split}
\end{align}
Finally, choosing 
$$
y= \frac{x^{1/4}}{ (\log x)^{1/2} (\log ( qN_Ex))^{1/2}}
$$
and $\theta =\frac{1}{4}$ in \eqref{split-pic-CM-GRH}, we establish \eqref{pic-CM-GRH}.

\subsection{A non-CM refinement}\label{non-CM-refine}

Recall that in \cite[Sec. 3.2]{AG} (see, particularly, \cite[Eq. (15)-(18)]{AG}), it has been shown that under GRH, one has
\begin{equation}\label{pic-expression-GRH}
\pi_{c}(x,E,q,a) =  \mathfrak{c}_E(q,a) \Li(x) + O(yx^{1/2} \log (qN_E x) ) 
+ O( \Sigma'_2  + \Sigma_3 \Li(x) ),
\end{equation}
where
\begin{equation}\label{Sigma2}
\Sigma'_2 :=\sum_{y<m\le \sqrt{x}+1} \pi_{E,m}(x,q,a) \ll x^{1/2} \log x +\frac{x^{3/2}}{y^2q} H(q),
\end{equation}
and
\begin{equation}\label{Sigma3}
\Sigma_3 :=  \sum_{m> y}  \frac{\mu(m)^2}{[\Bbb{Q}(E[m])\Bbb{Q}(\zeta_q):\Bbb{Q}]} \ll  \frac{\tau_2(q_2)}{y^3  \phi(q) }M_E^3.
\end{equation}
In this section, we shall refine the estimates for $\Sigma_2$ (for ``small'' $q$) and $\Sigma_3$ (when $q$ and $M_E$ do not have many common prime factors) as follows.

First, by using Hasse's bound $|a_p|\le 2\sqrt{p} \le 2\sqrt{x}$ for $p\le x$, we have
\begin{align*}
 \pi_{E,m}(x,q,a)
&\le \#\{ p\le x \mid p\nmid 2N_E,  p\equiv a\,\mymod{q},  p\equiv 1\,\mymod{m}, m^2\mid  |\bar{E}(\Bbb{F}_p)| \}   +1\\
&\le \sum_{|b| \le 2\sqrt{x}} \#\{ p\le x \mid p\nmid 2N_E,  m\mid p-1, m^2\mid   p+1 -b, a_p=b \}  +1\\
&\le  \sum_{\substack{ |b| \le 2\sqrt{x},b\neq 2\\ m\mid b-2  }}  \sum_{\substack{ p\le x\\m^2\mid  p+1 -b }} 1  +\sum_{\substack{ p\le x\\ m^2\mid p-1  }} 1\\
&\ll  \sum_{\substack{ |b| \le 2\sqrt{x},b\neq 2\\ m\mid b-2   }} \frac{x}{\phi(m^2) \log (9x/m^2 )  }  +   \frac{x}{m^2},
\end{align*}
where the last estimate follows from the Burn-Titchmarsh theorem, provided that $m\le  \sqrt{x} +1$.\footnote{We shall note that this argument is inspired by the argument of Cojocaru and M.R. Murty \cite[Sec. 4]{CM04}, and our new input is the use of the Burn-Titchmarsh theorem in the last estimate.} Note that $t \log (9x/t^2 )$ is increasing for $0<t\le \sqrt{x} +1$, as its derivative is
$ \log (9x/t^2 )  +t( -2/t) \ge (\log 8) - 2 >0$, when $x$ is sufficiently large. Thus, we obtain
\begin{align*}
\sum_{y< m\le  \sqrt{x} +1} \pi_{E,m}(x,q,a)
& \ll \sum_{y< m\le  \sqrt{x} +1} \frac{\sqrt{x}}{m}  \frac{x}{m\phi(m) \log (9x/m^2 )  }  +\frac{x}{y}\\
& \le  \frac{x^{3/2}}{y\log (9x/y^2 )} \sum_{y< m\le  \sqrt{x} +1}   \frac{1}{m\phi(m) } +\frac{x}{y}.
\end{align*}
 By Abel's summation and the elementary estimate $\sum_{m\le t} \frac{1}{\phi(m)}  = \gamma \log t +O(1)$ for some constant $\gamma>0$,  the last sum above is $\ll \frac{1}{y}$ (see also \cite[Lemma 10]{AG}).
Hence, we derive
\begin{equation}\label{sigma2-primed-bd}
\Sigma'_2
 \ll \frac{x^{3/2}}{y^2 \log(3x/y^2)}  +\frac{x}{y}.
\end{equation}

To estimate the tail $\Sigma_3$, we shall closely follow the argument of \cite[p. 1294]{AG}. For each $q$, we set $q_1=\frac{q}{q_2}$ where $q_2$ denotes the largest divisor of $q$ that is coprime to $M_E$. As $\mu(m)^2=1$ if and only if $m$ is square-free, recalling that $M_E$ is square-free by its definition \eqref{def-ME}, we can write
\begin{equation*}
\Sigma_3= \sum_{\substack{ m> y\\ m \text{ square-free} }}  \frac{ 1}{[\Bbb{Q}(E[m])\Bbb{Q}(\zeta_q):\Bbb{Q}]}
=\sum_{\substack{ de  > y \\ d\mid M_E,  (e,M_E)=1 } }   \frac{ 1}{[\Bbb{Q}(E[de])\Bbb{Q}(\zeta_q):\Bbb{Q}]},
\end{equation*}
which, by the decomposition $q=q_1q_2$, is
\begin{align*}
&\sum_{d\mid M_E }  \frac{1}{[\Bbb{Q}(E[d])\Bbb{Q}(\zeta_{q_1}):\Bbb{Q}]  }
\sum_{\substack{ e  > y/d \\ (e,M_E)=1 } }   \frac{ 1}{[\Bbb{Q}(E[e])\Bbb{Q}(\zeta_{q_2}):\Bbb{Q}]}\\
&\le  \sum_{d\mid M_E }  \frac{1}{ \phi([d,q_1])  }
\sum_{\substack{ e  > y/d \\ (e,M_E)=1 } }   \frac{ [\Bbb{Q}(E[e])\cap \Bbb{Q}(\zeta_{q_2}):\Bbb{Q}]}{[\Bbb{Q}(E[e]):\Bbb{Q}][\Bbb{Q}(\zeta_{q_2}):\Bbb{Q}]}.
\end{align*}
(Here, we used the fact that the $d$-th cyclotomic field is contained in $\Bbb{Q}(E[d])$.) Moreover, it follows from the facts 
$$
[\Bbb{Q}(E[e]):\Bbb{Q}]\gg e^3 \phi(e) \text{ and } [\Bbb{Q}(E[e])\cap \Bbb{Q}(\zeta_{q_2}):\Bbb{Q}] =\phi((e,q_2))
$$ 
(see, e.g., \cite[p. 1294]{AG}) that
 the last sum above is
 $$
 \ll  \frac{1}{\phi(q_2)} \sum_{k\mid q_2} \phi(k) \sum_{\substack{ e  > y/d \\ (e,q_2)=k } } \frac{1}{e^3 \phi(e)}  \le
 \frac{1}{\phi(q_2)} \sum_{k\mid q_2} \phi(k) \sum_{ kr  > y/d  } \frac{1}{(kr)^3 \phi(kr)}.  
 $$
By the inequality $\phi(k)\phi(r) \le \phi (kr)$ and the estimate $ \sum_{ r  > X  } \frac{1}{r^3 \phi(r)} \ll X^{-3}$ (see, e.g., \cite[Lemma 10]{AG}), we see that the last quantity is
$$
 \le  \frac{1}{\phi(q_2)} \sum_{k\mid q_2}\frac{1}{k^3} \sum_{ r  > y/(dk)  } \frac{1}{r^3 \phi(r)}
 \ll  \frac{1}{\phi(q_2)} \sum_{k\mid q_2} \frac{1}{k^3} \frac{(dk)^3}{y^3} .   
$$
Thus, recalling that  $\phi([d,q_1])\phi((d,q_1)) = \phi(d)\phi(q_1) $, we derive
\begin{equation}\label{signa3-primed-bd}
\Sigma_3 
\ll \frac{\tau_2(q_2)}{y^3}
\sum_{d\mid M_E }  \frac{1}{ \phi([d,q_1])  } \frac{1}{\phi(q_2)}  d^3
= \frac{\tau_2(q_2)}{y^3 \phi(q)} \sum_{d\mid M_E }  \frac{\phi((d,q_1)) d^3}{ \phi(d)  }
= \frac{\tau_2(q_2)}{y^3 \phi(q)} R_{E,q_1},
\end{equation}
where $ R_{E,q_1}$ is defined as in \eqref{def-REq}. Thus, inserting \eqref{signa3-primed-bd} into the argument of  \cite[Sec. 3.2]{AG} (instead of using the bound given in \eqref{Sigma3}) yields the last assertion of the theorem.

Furthermore, by \eqref{pic-expression-GRH}, \eqref{sigma2-primed-bd}, and \eqref{signa3-primed-bd}
we can choose
\begin{equation}\label{choice-y-GRH}
y = \frac{x^{1/3}}{(\log x)^{1/3}(\log (qN_Ex))^{1/3} }
\end{equation}
to deduce \eqref{pic-aq-GRH}, which  completes the proof.

\section{Proof of Theorem \ref{main-thm-exp}} \label{proof-2}

Throughout this section, $p$ will denote a (rational) prime coprime to $N_E$.
We start by observing $d_pe_p =  |\bar{E}(\Bbb{F}_p)|=p+1-a_p$ and writing
$$
\pi_{e}(x,E,q,a)
=\sum_{\substack{p\le x\\ p\equiv a\mymod{q} }}e_p=\sum_{\substack{p\le x\\ p\equiv a\mymod{q} }}\frac{p}{d_p}+\sum_{\substack{p\le x\\ p\equiv a\mymod{q} }}  \frac{1}{d_p} (1 - a_p),
$$
where by Hasse's bound, the last sum is
$$
\ll  \sum_{\substack{p\le x\\ p\equiv a\mymod{q} }}  (1 + |a_p| )
 \ll \sum_{\substack{p\le x\\ p\equiv a\mymod{q} }} \sqrt{p} \ll \frac{x^{3/2}}{q}.
$$

For the main term, as done in \cite{FK14} and \cite{Wu14}, it follows from the identity
\begin{equation}\label{1/m}
\frac{1}{m}=\sum_{de\mid m}\frac{\mu(d)}{e}
\end{equation}
(see, e.g., the formula below \cite[Eq. (3.2)]{Wu14})
that
$$
\sum_{\substack{p\le x\\ p\equiv a\mymod{q} }}\frac{p}{d_p}
=\sum_{\substack{p\le x\\ p\equiv a\mymod{q} }}p \sum_{de\mid d_p}\frac{\mu(d)}{e}
=\sum_{m\le \sqrt{x} +1} \sum_{de\mid m}\frac{\mu(d)}{e} \sum_{\substack{p\le x \\ p\equiv a\mymod{q}\\m\mid d_p}}p. 
$$
Therefore, we can consider the  splitting  
\begin{align}\label{split-pie}
 \begin{split}
&\pi_{e}(x,E,q,a)\\
&= \sum_{m\le y} \sum_{de\mid m}\frac{\mu(d)}{e} \sum_{\substack{p\le x\\ p\equiv a\mymod{q}\\m\mid d_p}}p
 +  \sum_{y< m\le \sqrt{x} +1} \sum_{de\mid m}\frac{\mu(d)}{e} \sum_{\substack{p\le x\\ p\equiv a\mymod{q}\\m\mid d_p}}p +O\left(\frac{x^{3/2}}{q}\right),
   \end{split}
\end{align}
where $y = y(x)\le \sqrt{x} +1$ is a parameter  to be chosen later.
Since
$$
\left|\sum_{de\mid m}\frac{\mu(d)}{e}\right| \le \frac{1}{m}  \le 1
$$
(cf. \cite[Eq. (3.6)]{Wu14}), 
Lemma \ref{cri_d}, together with \eqref{tri_bd_Em}, yields that the last triple sum in \eqref{split-pie} is
$$
\ll  \sum_{y\le m\le \sqrt{x} +1}x \pi_{E,m}(x,q,a)  \ll \sum_{ y \le m\le \sqrt{x} +1} \frac{x^2}{m^2}\ll \frac{ x^2}{y}. 
$$

Now, applying Abel's summation, we deduce
\begin{align*}
\sum_{\substack{p\le x\\ p\equiv a\mymod{q}\\  m\mid d_p}}p
=&x\pi_{E,m}(x,q,a)-\int_2^{x}\pi_{E,m}(t,q,a)dt
\\
&=x\frac{ \gamma_{E,m}(q,a)}{[\Bbb{Q}(E[m])\Bbb{Q}(\zeta_q):\Bbb{Q}]}\Li(x) +x\mathcal{E}_{E,m}(x,q,a)\\
&-  \int_2^{x}\frac{ \gamma_{E,m}(q,a)}{[\Bbb{Q}(E[m])\Bbb{Q}(\zeta_q):\Bbb{Q}]}\Li(t)dt -\int_2^{x}\mathcal{E}_{E,m}(t,q,a)dt,
\end{align*} 
where $\mathcal{E}_{E,m}(x,q,a)= \pi_{E,m}(x,q,a)-\frac{\gamma_{E,m}(q,a)}{[\Bbb{Q}(E[m])\Bbb{Q}(\zeta_q):\Bbb{Q}]}\Li(x)$. Hence, by the expression
$$
\Li(x^2)=x\Li(x) -\int_2^{x}\Li(t)dt +O(1),
$$
 we have
\begin{align}\label{mid4}
 \begin{split}
\sum_{\substack{p\le x\\ p\equiv a\mymod{q}\\  m\mid d_p}}p
=\frac{ \gamma_{E,m}(q,a)}{[\Bbb{Q}(E[m])\Bbb{Q}(\zeta_q):\Bbb{Q}]}\Li(x^2)+O\left( x \max_{ t\le x}|\mathcal{E}_{E,m}(t,q,a)| +1\right).
 \end{split}
\end{align}
Again, by \eqref{CDT-3}, we obtain
\begin{align}\label{error-pie-uncond}
\sum_{m\le (\log x)^A }  \max_{ t\le x}|\mathcal{E}_{E,m}(t,q,a) |\ll  x (\log x)^A\exp (- c_{5,A, B}  \sqrt{\log x} ).
\end{align}
Therefore, by \eqref{split-pie}, \eqref{mid4} and  \eqref{error-pie-uncond}, choosing $y = (\log x)^A $, we conclude that
\begin{align*}
\pi_{e}(x,E)&=\sum_{1\le m\le (\log x)^A }  \sum_{de\mid m}\frac{\mu(d)}{e} \frac{ \gamma_{E,m}(q,a)}{[\Bbb{Q}(E[m])\Bbb{Q}(\zeta_q):\Bbb{Q}]}\Li(x^2)\\
&+O\left(   x^2 (\log x)^A\exp (- c_{5,A, B}  \sqrt{\log x} ) + \frac{x^2}{(\log x)^A} +\frac{x^{3/2}}{q}\right)  .
\end{align*}
Note that
\begin{equation}\label{tail-2}
\sum_{m> y} \sum_{de\mid m}\frac{\mu(d)}{e}  \frac{ \gamma_{E,m}(q,a) }{[\Bbb{Q}(E[m])\Bbb{Q}(\zeta_q):\Bbb{Q}]} \Li(x^2)
\ll \sum_{m> y} \frac{1}{\phi(m)^2} \frac{x^2}{\log x}
\ll \frac{1}{y} \frac{x^2}{\log x},
\end{equation}
where the last estimate follows from \eqref{AG-lemma11}. 
Hence, we finally arrive at
\begin{align*}
  \begin{split}
\pi_{e}(x,E,q,a)
= \e_{E,q,a}\Li(x^2)+O_{A,B}\left(   \frac{x^2}{(\log x)^A} \right)
 \end{split}
\end{align*}
whenever $qN_E\le (\log x)^B$.

\section{Conditional estimates for $\pi_{e}(x,E,q,a)$} \label{proof-pie-GRH}

Throughout this section, we shall assume GRH and apply \eqref{GRH-CDT}. More precisely, by \eqref{GRH-CDT} and \eqref{mid4}, under GRH, we have
\begin{align}\label{mid-0}
 \begin{split}
\sum_{\substack{p\le x\\ p\equiv a\mymod{q}\\  m\mid d_p}}p
=\frac{ \gamma_{E,m}(q,a) }{[\Bbb{Q}(E[m])\Bbb{Q}(\zeta_q):\Bbb{Q}]}\Li(x^2)+O\left( x^{3/2}\log  ( mqN_E)\right).
 \end{split}
\end{align}
Also, bounding each $p$ trivially by $x$ and using Lemma \ref{cri_d}, we have
\begin{equation}\label{pie-middle}
\sum_{y\le m\le  \sqrt{x} +1} \sum_{de\mid m}\frac{\mu(d)}{e} \sum_{\substack{p\le x\\ p\equiv a\mymod{q}\\m\mid d_p}}p 
\ll x \sum_{y< m \le \sqrt{x} +1} \pi_{E,m}(x,q,a).
\end{equation}

\subsection{Elliptic curves with CM}

We begin by noting that if $E$ has CM, \eqref{sum-y-piEm} and \eqref{middle-sum-piEm} tell us that the right of \eqref{pie-middle} is $\ll   x\mathcal{E}_1 (x) $ with
\begin{align*}
\mathcal{E}_1 (x)  :=     \min\left\{   \left( \frac{\sqrt{x}}{q} + \frac{\sqrt{x}\log x}{q^2} + \frac{x}{yq^3} \right)G_D(a,q)   ,   \frac{x}{y \log  x}  + x^{\theta} \log N_E +  \frac{x}{x^{\theta}}\right\}.
\end{align*}
Putting \eqref{split-pie}, \eqref{tail-2}, \eqref{mid-0}, and this bound together  then yields 
\begin{align}\label{split-pie-CM}
 \begin{split}
\pi_{e}(x,E,q,a)
&= \sum_{m=1}^{\infty} \sum_{de\mid m}\frac{\mu(d)}{e} \frac{ \gamma_{E,m}(q,a)}{[\Bbb{Q}(E[m])\Bbb{Q}(\zeta_q):\Bbb{Q}]}\Li(x^2)+ O\left( yx^{3/2}\log  ( qN_Ex)\right)\\
&+ O\left( x \mathcal{E}_1 (x)   + \frac{x^2}{y\log x} +\frac{x^{3/2}}{ q}\right).
   \end{split}
\end{align}
Finally, as done in \cite[p. 1301]{AG}, applying \cite[Lemma 2.4]{GK91} to find $y \in[2 q ,
x^{1/2}]$ to balance the error terms in \eqref{split-pie-CM},  we deduce \eqref{pie-CM-GRH}.

%
%

\subsection{Non-CM elliptic curves}
Assume that $E$ is non-CM. As discussed in Section \ref{non-CM-refine}, by \eqref{sigma2-primed-bd} and \eqref{Sigma2}, the last sum in \eqref{pie-middle} is
$$
\ll
\mathcal{E}_2(x):= \min\left\{ x^{1/2} \log x +\frac{x^{3/2}}{y^2q} H(q), \frac{x^{3/2}}{y^2 \log(3x/y^2)}  +\frac{x}{y} \right\}
$$
provided that $ 2q\le y\le \sqrt{x}$, and thus the triple sum on the left of \eqref{pie-middle} is $\ll x\mathcal{E}_2(x)  $.

By \eqref{1/m},  we have
\begin{equation}\label{sigma3''}
\sum_{m> y} \sum_{de\mid m}\frac{\mu(d)}{e} \frac{\gamma_{E,m}(q,a) }{[\Bbb{Q}(E[m])\Bbb{Q}(\zeta_q):\Bbb{Q}]}
\ll \sum_{m> y} \frac{1}{m} \frac{1}{[\Bbb{Q}(E[m])\Bbb{Q}(\zeta_q):\Bbb{Q}]}.
\end{equation}
From an argument  analogous to the one leading to \eqref{signa3-primed-bd}, it follows that the right of \eqref{sigma3''} is
$$
\ll
\sum_{d\mid M_E^\infty }  \frac{1}{d [\Bbb{Q}(E[d])\Bbb{Q}(\zeta_{q_1}):\Bbb{Q}]  }\frac{1}{\phi(q_2)} \sum_{k\mid q_2} \frac{1}{k^3} \frac{(dk)^3}{y^3} .
$$
Unfortunately, if we used the bound $[\Bbb{Q}(E[d])\Bbb{Q}(\zeta_{q_1}):\Bbb{Q}]\ge \phi([d,q_1]) $  as before, the above sum over $d$ would not converge. To resolve this issue, we recall that by Serre's open image theorem, Freiberg and  Kurlberg \cite[Proposition  3.2]{FK14} showed that there exists a constant $B_E$, depending only on $E$, such that $ B_E \cdot [\Bbb{Q}(E[m]):\Bbb{Q}]\ge |\GL_2(\Bbb{Z}/m\Bbb{Z})| \gg m^3 \phi(m)$, for any $m\in\Bbb{N}$, whenever $E$ is non-CM. From which, we derive the upper bound
$$
\ll
\sum_{d\mid M_E^\infty }  \frac{B_E}{d^4 \phi(d) }\frac{1}{\phi(q_2)} \sum_{k\mid q_2} \frac{1}{k^3} \frac{(dk)^3}{y^3}    =\frac{\tau_2(q_2)}{y^3 \phi(q_2)}\sum_{d\mid M_E^\infty }  \frac{B_E}{d \phi(d) }.
$$

Thus,  by \eqref{split-pie}, \eqref{mid-0}, and the above discussion, we obtain
\begin{align*} 
 \begin{split}
\pi_{e}(x,E,q,a)
&= \sum_{m=1}^{\infty} \sum_{de\mid m}\frac{\mu(d)}{e} \frac{ \gamma_{E,m}(q,a)}{[\Bbb{Q}(E[m])\Bbb{Q}(\zeta_q):\Bbb{Q}]}\Li(x^2)
+ O\left( yx^{3/2}\log  ( qN_E x)\right) \\
&+     O\left(x \mathcal{E}_2 (x) +\frac{\tau_2(q_2) }{ \phi(q_2) } \sum_{d\mid M_E^\infty }  \frac{B_E}{d \phi(d) } \frac{x^2}{y^3\log x} +\frac{x^{3/2}}{q} \right) .
   \end{split}
\end{align*}
Hence, balancing the errors as in \cite[Sec. 3.2.3]{AG} gives  the desired estimate \eqref{pie-nonCM-GRH-2}. 

Finally, to derive \eqref{pie-nonCM-GRH-1}, we instead use the bound $[\Bbb{Q}(E[m])\Bbb{Q}(\zeta_q):\Bbb{Q}]\ge \phi(m)$ in \eqref{sigma3''} so that the resulting error is $\ll y^{-1}$. Consequently, the last big-O term above can be replaced by $O(x \mathcal{E}_2 (x) + \frac{x^2}{y\log x} +\frac{x^{3/2}}{q}  ) $, and choosing $y$ as in \eqref{choice-y-GRH} yields \eqref{pie-nonCM-GRH-1}. 



\section{When the Landau-Siegel zero is not too close to 1}\label{final-rmk}

In this section, we shall assume \eqref{lower-bd-L}. Suppose that there is an exceptional Dirichlet character $\chi_1$ modulo $Q_1$ such that $L(s,\chi_1)$ admits a Landau-Siegel zero $\beta_1$. Arguing classically, by the mean value theorem, we have
$$
1- \beta_1 = \frac{ L(1,\chi_1)}{L'(\sigma_1,\chi_1)} 
$$
for some $\sigma_1\in (\beta_1, 1)$. This, combined with \eqref{lower-bd-L} and the well-known estimate $L'(\sigma_1,\chi_1) =O( (\log Q_1)^2)$,  yields
$$
1- \beta_1>\frac{c_6}{(\log Q_1)^{S+2}}
$$ 
for some $c_6>0$. From this lower bound, one has
\begin{equation}\label{x-beta1-bd}
x^{\beta_1} \ll x \exp \left( -c_6(\log x) (\log Q_1)^{-S-2}   \right)  \ll x \exp ( -c_6 \sqrt{\log x}  )
\end{equation}
whenever $(\log Q_1)^{S+2}  \le \sqrt{\log x}$ or, equivalently, $Q_1\le \exp \left( (\log x)^{1/(2S+4)}   \right) $, which leads to the following improvement of the Siegel-Walfisz theorem:
$$
\pi(x,q,a) =\frac{1}{\phi(q)} \Li(x) + O\left(x \exp ( -c_6' \sqrt{\log x} )  \right)
$$
uniformly in  $q\le \exp \left( (\log x)^{1/(2S+4)}   \right) $ for some abosulte $c_6'\in(0,c_6)$.  (Cf. \cite[\S\S 21-22]{Da3rd}.)

Now, in the same notation of Section \ref{pre}, for any CM elliptic curve $E$ of conductor $N_E$, assuming that $\log m    \le  \frac{1}{2\kappa} (\log x)^{1/(2S+4)}$
and
$ \log (qN_E )  \le \frac{1}{2\kappa} (\log x)^{1/(2S+4)}$, by \eqref{E-logQ-bd}, we know that
$$
\log Q_1  \le\kappa \log (m qN_E)   \le (\log x)^{1/(2S+4)}.
$$
Therefore, by \eqref{CDT} and \eqref{x-beta1-bd}, we arrive at
\begin{align*}
 \begin{split}
\pi_C(x) &=\frac{|C|}{|G|} \Li(x) + O\left(  |C|^{\frac{1}{2}}  x 
\exp (  -c'_2 \sqrt{\log x} )   + x \exp ( -c_6\sqrt{\log x})\right).
  \end{split}
\end{align*}
Hence, under the assumption of  \eqref{lower-bd-L}, the estimate  \eqref{CDT-3} can be improved as
\begin{equation}\label{Z-bd-CDT}
\pi_{E,m}(x,q,a) =\frac{ \gamma_{E,m}(q,a)}{[\Bbb{Q}(E[m])\Bbb{Q}(\zeta_q):\Bbb{Q}]} \Li(x)
+O\left(   x\exp (- c_7  \sqrt{\log x} ) \right),
\end{equation}
uniformly in $\log m    \le  \frac{1}{2\kappa} (\log x)^{1/(2S+4)}$
and
$ \log (qN_E )  \le \frac{1}{2\kappa} (\log x)^{1/(2S+4)}$, for some $c_7>0$.

Thus, gathering \eqref{cut-pic},  \eqref{tail-cyclicity}, and \eqref{Z-bd-CDT} (instead of \eqref{CDT-4}), 
we get
\begin{align*}
\pi_c(x,E,q, a)
=\sum_{m=1}^\infty \frac{ \gamma_{E,m}(q,a)\mu(m)}{[\Bbb{Q}(E[m])\Bbb{Q}(\zeta_q):\Bbb{Q}]} \Li(x)+ O\left(   y x\exp (- c_7  \sqrt{\log x} ) +   \frac{x}{y}\right)
\end{align*}
whenever $ \log (qN_E )  \le \frac{1}{2\kappa} (\log x)^{1/(2S+4)}$. This, together with the choice
$$
y= \exp \left(  \min\left\{ \frac{1}{2\kappa}, \frac{c_7}{2} \right\} (\log x)^{1/(2S+4)}   \right),
$$
then yields the first claimed estimate of Theorem \ref{main-thm-NLS}. Finally, we conclude this section by noting that the last estimate of Theorem \ref{main-thm-NLS} follows similarly while using \eqref{Z-bd-CDT} (instead of \eqref{CDT-4}) in the argument starting from \eqref{mid4}.

\section*{Acknowledgments}
The author thanks Professors Amir Akbary,  Wen-Ching Winnie Li, and Ram Murty for their encouragement as well as helpful comments and suggestions. He is also grateful to the referee for the careful reading and insightful comments.


\end{document}